\newtheorem{theorem}{Theorem}[section]
\newtheorem{lemma}[theorem]{Lemma}
\newtheorem{proposition}[theorem]{Proposition}
\theoremstyle{definition}
\newtheorem{definition}[theorem]{Definition}
\newtheorem{remark}[theorem]{Remark}
\numberwithin{equation}{section}
\newcommand{\R}{\mathbb{R}}
\newcommand{\C}{\mathbb{C}}
\newcommand{\Rn}{\mathbb{R}^n}
\newcommand{\Fpw}{\mathcal{F}_{\phi,\omega}}       
\newcommand{\Fouinv}{\mathcal{F}^{-1}_{\phi,\omega}} 
\newcommand{\Dpw}{\nabla_{\phi,\omega}}            
\newcommand{\Spw}{\mathcal{S}_{\phi,\omega}}       
\newcommand{\Lpw}{L^2_{\phi,\omega}}               
\newcommand{\Jphi}{|J_\phi(x)|}                    
\title{Spectral Theory of the Weighted Fourier Transform with respect to a Function in $\mathbb{R}^{n}$: Uncertainty Principle and Diffusion-Wave Applications}
\author[1]{Gustavo A. Dorrego}
\author[1]{Luciano L. Luque}
\affil[1]{Departamento de Matemática, Facultad de Ciencias Exactas y Naturales y Agrimensura, Universidad Nacional del Nordeste, Corrientes, Argentina. \protect\\ \texttt{gadorrego@exa.unne.edu.ar}}
\date{\today}
\begin{document}
	
\maketitle

\begin{abstract}
In this paper, we generalize the weighted Fourier transform with respect to a function, originally proposed for the one-dimensional case in \cite{Dorrego}, to the $n$-dimensional Euclidean space $\mathbb{R}^{n}$. We develop a comprehensive spectral theory on a weighted Hilbert space, establishing the Plancherel identity, the inversion formula, the convolution theorem, and a Heisenberg-type uncertainty principle depending on the geometric deformation. Furthermore, we utilize this framework to rigorously define the weighted fractional Laplacian with respect to a function, denoted by $(-\Delta_{\phi,\omega})^{s}$. Finally, we apply these tools to solve the generalized time-space fractional diffusion-wave equation, demonstrating that the fundamental solution can be expressed in terms of the Fox H-function, intrinsically related to the generalized $\omega$-Mellin transform introduced in \cite{Dorrego}.

\medskip

\noindent \textbf{Keywords:} Weighted Fourier transform; Diffusion-wave equation; Uncertainty principle; Spectral theory; Generalized Mellin transform.
\noindent \textbf{MSC 2020:} 42B10, 26A33, 35S05.
\end{abstract}

\section{Introduction}
\label{sec:intro}

The study of integral transforms and fractional operators with respect to functions has seen significant growth, driven by the need to model complex physical phenomena with non-homogeneous scaling. Historically, the concept of fractional integration and differentiation with respect to an arbitrary function is briefly mentioned in the seminal monograph by Samko, Kilbas, and Marichev \cite{Samko1993}. As noted in their historical remarks, this theoretical framework had already been addressed by earlier authors, although Samko et al. focused their exposition primarily on operators of Riemann-Liouville type. However, to fully appreciate the spectral underpinnings of these operators, it is imperative to contextualise the discussion within the broader epistemological framework of \textit{Generalised Fractional Calculus} (GFC). This perspective, extensively systematised in the seminal works of Kiryakova \cite{kiryakova1994}, unifies a vast class of fractional operators through the employment of generalised integral transforms (such as the Obrechkoff transform) and special functions of the Fox H-type. From a theoretical-operational standpoint, the introduction of a density weight $\omega$ and a geometric deformation $\phi$ is not merely a parametric generalisation; rather, it may be interpreted through the lens of \textit{transmutation operator theory} and \textit{index transforms}, as elucidated by Luchko \cite{luchko1999} and Yakubovich \cite{yakubovich1994}. Within this context, the Weighted Fourier Transform proposed herein functions analogously to a transmutation operator that maps the deformed geometry onto the standard Euclidean space, whilst preserving essential spectral properties and aligning with the classical tradition of generalised integral transforms.

In recent years, this field has been revitalized to address modern modeling requirements. Almeida \cite{Almeida2017} formally introduced the Caputo-type fractional derivative with respect to a function, complementing the classical Riemann-Liouville definition and allowing for the use of standard initial conditions. Extending these concepts further, Fernandez and Fahad \cite{FernandezFahad2022_Fractal} developed a comprehensive framework for \textit{weighted} fractional calculus, unifying various operators through conjugation relations and establishing the theoretical basis for weighted differential equations. Concurrently, the analysis of such equations with variable coefficients has gained traction, as seen in the works of Restrepo and Suragan \cite{Restrepo2021} and Fernandez et al. \cite{Fernandez2024}.

Recently, Fernandez et al. \cite{Fernandez2024} established the definitions of fractional Laplacians with respect to functions using a transmutation approach and singular integrals. While their framework provides a robust foundation for geometric deformations, our approach differs by introducing a spectral construction via a weighted Fourier transform that involves an additional density parameter $\omega$. This key distinction allows us to derive a Heisenberg-type Uncertainty Principle and obtain explicit solutions in terms of generalized Mellin transforms, which are not accessible through the pure geometric deformation alone.
Despite these advancements, the spectral analysis of such weighted operators—particularly in higher dimensions—remains underdeveloped. In a recent preliminary work \cite{Dorrego}, a definition of a weighted Fourier transform was proposed in the one-dimensional case to establish connections with the generalized Mellin transform. However, fundamental harmonic analysis properties such as the Uncertainty Principle and the treatment of diffusion-wave processes in $\mathbb{R}^n$ were left as open problems.

The main objective of this paper is to fill this gap by establishing a unified spectral theory in $\mathbb{R}^n$. We introduce the \textit{Multidimensional Weighted Fourier Transform} $\Fpw$, incorporating both a geometric deformation $\phi: \Rn \to \Rn$ and a density weight $\omega: \Rn \to \mathbb{C}$. 

Our main contributions are:
\begin{enumerate}
    \item The rigorous definition of the transform in $\Rn$ and the associated generalized Schwartz space $\Spw(\Rn)$.
    \item The derivation of a Heisenberg-type Uncertainty Principle, confirming the canonical conjugation between the deformed geometry and the frequency domain.
    \item The solution of the generalized time-space fractional diffusion-wave equation ($0 < \alpha \le 2$), unifying sub-diffusion and wave propagation.
    \item The explicit representation of the fundamental solution via the inverse generalized Mellin transform.
\end{enumerate}

Finally, it is essential to address the distinction between spectral equivalence and physical applicability. While it is mathematically accurate that the operator $(-\Delta_{\phi,\omega})^{s}$ is unitarily equivalent to the standard Laplacian via the transform $\mathcal{F}_{\phi,\omega}$—potentially suggesting a \textit{``triviality by conjugation''}—this viewpoint overlooks the physical motivation of the framework. In applications involving \textbf{heterogeneous porous media} or \textbf{anisotropic biological tissues}, the geometric deformation is an intrinsic property of the material that exists in the physical coordinates. Mapping the problem to a standard Euclidean space often obscures the spatial locality of boundary conditions and interaction terms. Moreover, the \textbf{Uncertainty Principle} derived herein serves as a robust validation of this approach: it demonstrates that the geometric deformation $\phi$ fundamentally alters the theoretical lower bound of information localization. Consequently, our framework allows for the rigorous analysis of diffusion in distorted geometries directly within the observational space, bridging the gap between spectral isomorphism and phenomenological modelling.
\section{Preliminaries and Assumptions}
\label{sec:prelim}

To establish a rigorous framework, we introduce the following standing assumptions on the functions $\phi$ and $\omega$ that will hold throughout the paper unless stated otherwise.

\begin{itemize}
    \item[\textbf{(A1)}] \textbf{Geometry:} Let $\phi: \R^n \to \R^n$ be a global $C^\infty$-diffeomorphism. That is, $\phi$ is bijective, infinitely differentiable, and its inverse $\phi^{-1}$ is also infinitely differentiable. We denote its Jacobian determinant by $\Jphi = |\det D\phi(x)|$, assuming $\Jphi \neq 0$ for all $x \in \R^n$.
    
    \item[\textbf{(A2)}] \textbf{Weight:} Let $\omega: \R^n \to \C$ be a non-vanishing complex function, i.e., $\omega(x) \neq 0$ for all $x \in \R^n$. We further assume that $\omega \in C^\infty(\R^n)$.
\end{itemize}

\begin{remark}[Topological and Singular Considerations]
    The global diffeomorphism assumption (A1) is crucial for the continuous spectral theory developed here. We distinguish two notable limiting cases that lie outside the scope of this paper but suggest rich avenues for future research:
    \begin{enumerate}
        \item \textbf{Bounded Image:} If the range of $\phi$ is a bounded domain $\Omega \subset \Rn$ (i.e., $\phi$ compactifies the space), the associated spectral theory shifts from a continuous integral transform (Fourier type) to a discrete series expansion (Fourier series type). In this scenario, the operator $(-\Delta_{\phi,\omega})$ would typically exhibit a discrete spectrum of eigenvalues $\lambda_k$ with eigenfunctions in $\Lpw(\Rn)$.
        \item \textbf{Singular Jacobian:} If the condition $\Jphi \neq 0$ is relaxed, allowing the Jacobian to vanish on a boundary $\partial \Omega$, the operator $\Dpw$ becomes a degenerate or singular elliptic operator (analogous to Legendre or Chebyshev operators in 1D). This leads to weighted Sobolev spaces with different density properties and requires a specialized treatment of boundary conditions.
    \end{enumerate}
\end{remark}

\subsection{Functional Spaces and Operators}

\begin{definition}[Weighted Hilbert Space]
    We define $\Lpw(\Rn)$ as the space of measurable functions $f: \Rn \to \C$ such that:
    \begin{equation}
        \|f\|_{\phi,\omega}^2 := \int_{\Rn} |f(x)|^2 |\omega(x)|^2 \Jphi \, dx < \infty.
    \end{equation}
    This space is a Hilbert space endowed with the inner product $\langle f, g \rangle_{\phi,\omega} = \int f \bar{g} |\omega|^2 \Jphi dx$.
\end{definition}

To properly define the test function space, we first introduce the canonical differential operator associated with the geometry.

\begin{definition}[Weighted Gradient]
    We define the Weighted Gradient operator $\Dpw$ as:
    \begin{equation}\label{eq: weighted_grad_def}
        \Dpw f(x) := \frac{1}{\omega(x)} \left( [D\phi(x)]^{-T} \nabla \right) \big( \omega(x) f(x) \big).
    \end{equation}
\end{definition}

\begin{remark}[Component-wise Representation]
    The weighted gradient operator can be understood component-wise. Let $(J^{-1})_{jk}$ denote the entries of the inverse Jacobian matrix $[D\phi(x)]^{-1}$, which physically correspond to $\frac{\partial x_k}{\partial \phi_j}$. The $j$-th component of the vector operator $\Dpw$ is given by:
    \begin{equation}
        (\Dpw)_j f(x) = \frac{1}{\omega(x)} \sum_{k=1}^n \frac{\partial x_k}{\partial \phi_j} \frac{\partial}{\partial x_k} \big( \omega(x) f(x) \big).
    \end{equation}
    Intuitively, this operator acts as the partial derivative with respect to the generalized coordinate $u_j = \phi_j(x)$, conjugated by the weight $\omega$. In the one-dimensional case ($n=1$), this recovers the operator $\frac{1}{\omega \phi'} \frac{d}{dx}(\omega \, \cdot \,)$ used in \cite{Dorrego}.
\end{remark}

\begin{definition}[Generalized Schwartz Space]
    We define the space of rapidly decreasing functions associated with the pair $(\phi, \omega)$, denoted by $\Spw(\Rn)$, as:
    \begin{equation}
        \Spw(\Rn) := \left\{ f: \Rn \to \C \, \Big| \, (\omega \cdot f) \circ \phi^{-1} \in \mathcal{S}(\Rn) \right\},
    \end{equation}
    where $\mathcal{S}(\Rn)$ is the classical Schwartz space.
\end{definition}

\begin{remark}
    The definition of $\Spw(\Rn)$ implies that the function $f$ behaves structurally as a Schwartz function under the deformation induced by $\phi$ and the modulation by $\omega$. Explicitly, this ensures that the quantities
    \begin{equation}
        \sup_{x \in \Rn} \left| (\phi(x))^\alpha \left( \Dpw \right)^\beta f(x) \right| < \infty
    \end{equation}
    remain bounded for all multi-indices $\alpha, \beta$, guaranteeing that the generalized Fourier transform maps this space onto the classical Schwartz space $\mathcal{S}(\Rn)$.
\end{remark}
\subsection{Weighted Fractional Operators with respect to Functions}

In the application section of this work, we will employ generalized fractional operators acting on the time variable. Following the framework of conjugation relations established by Fernandez and Fahad \cite{FernandezFahad2022_Conj}, we define these operators via a modification of the classical Riemann-Liouville and Caputo definitions.

Let $\gamma: [0, \infty) \to [0, \infty)$ be an increasing $C^1$ function with $\gamma'(t) > 0$, and let $\rho(t)$ be a positive weight function.

\begin{definition}[Weighted Fractional Integral]
    For $\alpha > 0$, the weighted fractional integral of a function $f(t)$ with respect to $\gamma$ is defined as:
    \begin{equation}
        {_{0+}\mathcal{I}}_{t,\gamma,\rho}^\alpha f(t) = \frac{1}{\rho(t)\Gamma(\alpha)} \int_0^t (\gamma(t) - \gamma(\tau))^{\alpha-1} \rho(\tau) f(\tau) \gamma'(\tau) \, d\tau.
    \end{equation}
\end{definition}

\begin{definition}[Weighted Hilfer Derivative]\cite{FernandezFahad2022_Conj}
    Let $m-1 < \alpha \le m$ (with $m \in \mathbb{N}$) and $0 \le \beta \le 1$. The weighted Hilfer fractional derivative with respect to $\gamma$, denoted by ${}^{H}\mathcal{D}_{t,\gamma,\rho}^{\alpha,\beta}$, is defined by:
    \begin{equation}
        {}^{H}_{0+}\mathcal{D}_{t,\gamma,\rho}^{\alpha,\beta} f(t) = {_{0+}\mathcal{I}}_{t,\gamma,\rho}^{\beta(m-\alpha)} \left( \frac{1}{\gamma'(t)} \left[ \frac{d}{dt} + \frac{\rho'(t)}{\rho(t)} \right] \right)^m {_{0+}\mathcal{I}}_{t,\gamma,\rho}^{(1-\beta)(m-\alpha)} f(t).
    \end{equation}
\end{definition}

\begin{remark}
    The parameters $\alpha$ and $\beta$ allow for interpolation between classical operators:
    \begin{itemize}
        \item If $\beta = 0$, we recover the weighted Riemann-Liouville derivative ${}^{RL}\mathcal{D}_{t,\gamma,\rho}^\alpha$.
        \item If $\beta = 1$, we recover the weighted Caputo derivative ${}^{C}\mathcal{D}_{t,\gamma,\rho}^\alpha$.
    \end{itemize}
    This operator is particularly suitable for our spectral method because it admits a well-defined generalized Laplace transform .
\end{remark}

\begin{equation} \label{eq:Laplace_Hilfer_Weighted_Corrected}
    \mathcal{L}_{\gamma,\rho} \left[ {}^{H}_{0+}\mathcal{D}_{t,\gamma,\rho}^{\alpha,\beta} \psi \right](z) = z^\alpha \tilde{\psi}(z) - \rho(0^+) \sum_{k=0}^{m-1} z^{m(1-\beta) + \alpha\beta - k - 1} \lim_{t \to 0^+} \left( \mathcal{I}_{t,\gamma,\rho}^{(1-\beta)(m-\alpha)-k} \psi \right)(t)
\end{equation}

To derive the fundamental solution in Section 7, we require the generalized Mellin transform framework. While the properties of this operator are studied in depth in \cite{Dorrego}, we provide the explicit definition here adapted to our specific deformation $\phi$ to ensure this manuscript is self-contained.

\begin{definition}[Weighted $\phi$-Mellin Transform]
\label{def:mellin_phi_omega}
Let $\phi$ and $\omega$ be defined as in Section 2. The weighted $\phi$-Mellin transform, adapted from \cite{Dorrego}, is defined as:
\begin{equation}
    \mathcal{M}_{\phi,\omega}[f](s) = \int_{0}^{\infty} (\phi(x))^{s-1} \omega(x) f(x) \phi'(x) \, dx,\quad s \in \mathbb{C},
\end{equation}
provided the integral converges. The corresponding inverse transform is given by:
\begin{equation}
    \mathcal{M}_{\phi,\omega}^{-1}[F(s)](x) = \frac{1}{2\pi i \, \omega(x)} \int_{\gamma - i\infty}^{\gamma + i\infty} F(s) (\phi(x))^{-s} \, ds,
\end{equation}
where $\gamma$ is a real number in the strip of convergence. This operator generalizes the classical Mellin transform (recovered when $\phi(x)=x, \omega(x)=1$) and is instrumental in analysing the spectral properties of operators with respect to functions.
\end{definition}

\section{The Multidimensional Weighted Fourier Transform}
\label{sec:transform}

We generalize the 1D definition given in \cite{Dorrego} to the multivariate setting.

\begin{definition}\label{def:fourier}
	For $f \in \Lpw(\Rn)$, the Weighted Fourier Transform with respect to $\phi$, denoted by $\Fpw$, is defined as:
	\begin{equation} \label{eq:def_Rn}
		[\Fpw f](\xi) = \frac{1}{(2\pi)^{n/2}} \int_{\Rn} e^{-i \xi \cdot \phi(x)} \omega(x) f(x) \Jphi \, dx.
	\end{equation}
\end{definition}

\begin{remark}
Consistency with the one-dimensional case:
It is important to note that for $n=1$, the Jacobian determinant $|J_{\phi}(x)|$ reduces to the derivative $|\phi'(x)|$. Consequently, the definition \eqref{eq:def_Rn} coincides exactly with the one-dimensional weighted Fourier transform introduced in our preliminary work \cite{Dorrego}. Thus, the operator proposed here is the natural Euclidean extension of the scalar theory.
\end{remark}
\medskip

It is important to emphasize that the integral representation in Definition \ref{def:fourier} is well-defined for functions in $L^1_{\omega}(\mathbb{R}^n)$. Since the intersection $L^1_{\omega}(\mathbb{R}^n) \cap L^2_{\omega}(\mathbb{R}^n)$ is a dense subspace of the Hilbert space $L^2_{\omega}(\mathbb{R}^n)$, and the operator satisfies the Plancherel's Identity (as we shall prove in Theorem \ref{Plancherel's Identity}), the weighted Fourier transform $\mathcal{F}_{\phi,\omega}$ is extended uniquely by continuity to a unitary operator on the entire space $L^2_{\omega}(\mathbb{R}^n)$. In what follows, we will understand the transform in this generalized $L^2$ sense.

\medskip
\begin{theorem}[Inversion Formula]
    Let conditions (A1)-(A2) hold. For any $f \in \Spw(\Rn)$, the inverse transform is given pointwise by:
	\begin{equation}
		f(x) = \frac{1}{\omega(x)(2\pi)^{n/2}} \int_{\Rn} e^{i \xi \cdot \phi(x)} [\Fpw f](\xi) \, d\xi.
	\end{equation}
\end{theorem}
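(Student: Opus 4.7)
The plan is to reduce the inversion formula to the classical Fourier inversion theorem via the geometric change of variables $u = \phi(x)$, exploiting precisely the way $\Spw(\Rn)$ was tailored to the classical Schwartz space $\mathcal{S}(\Rn)$.

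First I would set $g := (\omega \cdot f) \circ \phi^{-1}$ and observe that, by definition of $\Spw(\Rn)$, we have $g \in \mathcal{S}(\Rn)$, so all ensuing integrals are absolutely convergent and Fubini/change-of-variables arguments are legitimate. In the forward transform \eqref{eq:def_Rn} I would perform the substitution $u = \phi(x)$, which is a global $C^\infty$-diffeomorphism by (A1) with Jacobian determinant $\Jphi$, so $du = \Jphi\, dx$. This yields
\begin{equation*}
    [\Fpw f](\xi) \;=\; \frac{1}{(2\pi)^{n/2}} \int_{\Rn} e^{-i\xi \cdot u}\, g(u)\, du \;=\; \widehat{g}(\xi),
\end{equation*}
identifying $\Fpw f$ with the standard Fourier transform $\widehat{g}$ of a classical Schwartz function.

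Next I would apply the classical Fourier inversion theorem to $g \in \mathcal{S}(\Rn)$, obtaining pointwise
\begin{equation*}
    g(u) \;=\; \frac{1}{(2\pi)^{n/2}} \int_{\Rn} e^{i\xi \cdot u}\, \widehat{g}(\xi)\, d\xi.
\end{equation*}
Evaluating at $u = \phi(x)$ and using $g(\phi(x)) = \omega(x) f(x)$ together with the identification $\widehat{g} = \Fpw f$ yields
\begin{equation*}
    \omega(x) f(x) \;=\; \frac{1}{(2\pi)^{n/2}} \int_{\Rn} e^{i\xi \cdot \phi(x)}\, [\Fpw f](\xi)\, d\xi.
\end{equation*}
Since $\omega(x) \neq 0$ for every $x \in \Rn$ by (A2), I may divide both sides by $\omega(x)$ to obtain the claimed pointwise inversion formula.

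The argument is essentially structural, so there is no genuine analytical obstacle: the nontrivial content lies entirely in having set up $\Spw(\Rn)$, (A1), and (A2) so that the substitution $u = \phi(x)$ produces a bona fide classical Schwartz function to which Fourier inversion applies. The only step requiring a modicum of care is verifying that the diffeomorphism $\phi$ really does preserve orientation considerations so that no sign ambiguity arises in $\Jphi = |\det D\phi(x)|$; this is handled by the absolute value in our definition and the non-vanishing Jacobian assumption, which together guarantee that the change of variables identity used above holds without correction terms.
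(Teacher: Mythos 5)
Your proposal is correct and follows essentially the same route as the paper's proof: both reduce the statement to the classical Fourier inversion theorem by introducing $g = (\omega \cdot f)\circ\phi^{-1}$, using the change of variables $u=\phi(x)$ to identify $\Fpw f$ with $\widehat{g}$, and then dividing by the non-vanishing weight $\omega(x)$. Your version is in fact slightly more explicit than the paper's about why $g\in\mathcal{S}(\Rn)$ justifies pointwise inversion, but the argument is the same.
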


\begin{proof}
    We aim to show that applying the inverse operator $\Fouinv$ to $[\Fpw f](\xi)$ recovers $f(x)$. 
    We introduce the change of variables $u = \phi(x)$ (fixed) and $v = \phi(y)$ (variable of integration). The measure transforms as $dv = |J_\phi(y)| dy$.
    Defining the auxiliary function $g(v) = (\omega \cdot f)(\phi^{-1}(v))$, the forward transform becomes the classical Fourier transform $\hat{g}(\xi)$.
    Substituting this back into the inversion integral:
    \[
    I(x) = \frac{1}{\omega(x)} \left( \frac{1}{(2\pi)^{n/2}} \int_{\Rn} e^{i \xi \cdot u} \hat{g}(\xi) \, d\xi \right).
    \]
    By the classical Fourier Inversion Theorem, the term in parentheses is $g(u)$. Thus $I(x) = g(\phi(x))/\omega(x) = f(x)$.
\end{proof}

\begin{remark}
\label{rem:sturm_liouville}
\textbf{(Spectral Discretisation and Bounded Domains).} It is pertinent to distinguish the spectral nature of the current framework from the bounded domain case. The theory developed herein assumes the domain is the entire Euclidean space $\mathbb{R}^n$, resulting in a continuous spectrum and an integral transform representation. In contrast, restricting the analysis to a bounded domain $\Omega \subset \mathbb{R}^n$ would naturally discretise the spectrum, necessitating the development of \textit{Generalized Fractional Sturm-Liouville Problems}. In such a setting, the weighted Fourier transform $\mathcal{F}_{\phi,\omega}$ transitions into a Fourier-type series expansion over the eigenfunctions of the operator $-\Delta_{\phi,\omega}$ subject to appropriate boundary conditions. This discrete counterpart represents a non-trivial extension of the theory and is crucial for modelling vibration modes in finite heterogeneous media.
\end{remark}

\begin{theorem}\label{Plancherel's Identity}[Plancherel's Identity]
    The Weighted Fourier Transform $\Fpw$ extends to a unitary operator from $\Lpw(\Rn)$ onto $L^2(\Rn)$. Specifically, for any $f, g \in \Lpw(\Rn)$, the following Parseval identity holds:
    \begin{equation}
        \langle f, g \rangle_{\phi,\omega} = \langle \Fpw f, \Fpw g \rangle_{L^2(\Rn)}.
    \end{equation}
    In particular, the "energy" is conserved:
    \begin{equation}
        \|f\|_{\phi,\omega}^2 = \int_{\Rn} |f(x)|^2 |\omega(x)|^2 \Jphi \, dx = \int_{\Rn} |[\Fpw f](\xi)|^2 \, d\xi.
    \end{equation}
\end{theorem}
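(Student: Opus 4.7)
The plan is to reduce the statement to the classical Plancherel theorem by means of the same change of variables that was used in the proof of the inversion formula, namely $v = \phi(x)$. The key observation is that the map
\[
U : \Lpw(\Rn) \to L^2(\Rn), \qquad (Uf)(v) := (\omega \cdot f)(\phi^{-1}(v)),
\]
is a unitary isomorphism between the weighted Hilbert space and the standard one. This follows from assumptions (A1)--(A2) together with the substitution $v = \phi(x)$, $dv = \Jphi\,dx$: a direct computation gives $\|Uf\|_{L^2(\Rn)}^2 = \int_{\Rn} |f(x)|^2 |\omega(x)|^2 \Jphi\,dx = \|f\|_{\phi,\omega}^2$ and surjectivity is immediate from the bijectivity of $\phi$ and non-vanishing of $\omega$.

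First I would work on the dense subspace $\Lpw(\Rn) \cap L^1_\omega(\Rn)$, where the defining integral of $\Fpw$ converges absolutely and the substitution can be applied pointwise. For $f$ in this subspace, the same change of variables converts \eqref{eq:def_Rn} into
\[
[\Fpw f](\xi) = \frac{1}{(2\pi)^{n/2}} \int_{\Rn} e^{-i\xi \cdot v} (Uf)(v)\,dv = \widehat{Uf}(\xi),
\]
where $\widehat{\phantom{g}}$ denotes the classical Fourier transform on $\Rn$. In operator form, this yields the factorisation $\Fpw = \mathcal{F} \circ U$. Polarising the norm identity and invoking the classical Parseval identity for $\mathcal{F}$, one obtains
\[
\langle f, g \rangle_{\phi,\omega} = \langle Uf, Ug\rangle_{L^2(\Rn)} = \langle \widehat{Uf}, \widehat{Ug}\rangle_{L^2(\Rn)} = \langle \Fpw f, \Fpw g \rangle_{L^2(\Rn)},
\]
which is the Parseval identity in its weighted form on the dense subspace.

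Finally, the extension to all of $\Lpw(\Rn)$ proceeds by the standard density argument: $\Fpw$ is isometric on a dense subspace, so it extends uniquely to a bounded linear operator on the whole Hilbert space, and the identity is preserved by continuity of the inner products. Unitarity onto $L^2(\Rn)$ then follows from the factorisation $\Fpw = \mathcal{F} \circ U$, as both $U$ and $\mathcal{F}$ are unitary, the former by the change of variables argument and the latter by classical Plancherel.

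The main technical obstacle is ensuring that the change of variables $v = \phi(x)$ is rigorously justified in the $L^2$ sense, which requires the global $C^\infty$-diffeomorphism hypothesis (A1) and the non-vanishing Jacobian to guarantee that the pullback map $U$ is a well-defined bijective isometry; without this, the surjectivity half of the statement (onto $L^2(\Rn)$) would fail. Beyond this structural point, the argument is a routine translation of the classical theorem through a unitary conjugation, which highlights why the weighted theory inherits essentially all harmonic-analytic properties of the flat Euclidean Fourier transform.
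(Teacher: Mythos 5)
Your proposal is correct and follows essentially the same route as the paper: both define the pullback isometry $U f = (\omega f)\circ\phi^{-1}$, observe the factorisation $\Fpw = \mathcal{F}\circ U$, and invoke the classical Plancherel theorem. You merely spell out the density/polarisation details that the paper leaves implicit (and which it addresses in the remark preceding the theorem), so there is nothing substantive to add.
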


\begin{proof}
    This is a direct consequence of the unitary nature of the classical Fourier transform. Let $u = \phi(x)$ and define the isometric isomorphism $U: \Lpw(\Rn) \to L^2(\Rn)$ by $Uf(u) = \omega(\phi^{-1}(u)) f(\phi^{-1}(u))$.
    Then, by definition, $\Fpw = \mathcal{F} \circ U$. Since both $U$ and the classical $\mathcal{F}$ are unitary operators (isometries), their composition $\Fpw$ is also unitary.
\end{proof}

\begin{theorem}[Riemann-Lebesgue Lemma]
    Let $L^1_{\phi,\omega}(\Rn)$ denote the space of measurable functions integrable with respect to the density $|\omega(x)|\Jphi$, equipped with the norm:
    \begin{equation}
        \|f\|_{L^1_{\phi,\omega}} := \int_{\Rn} |f(x)| |\omega(x)| \Jphi \, dx < \infty.
    \end{equation}
    If $f \in L^1_{\phi,\omega}(\Rn)$, then the transform decays at infinity:
    \begin{equation}
        \lim_{|\xi| \to \infty} |[\Fpw f](\xi)| = 0.
    \end{equation}
\end{theorem}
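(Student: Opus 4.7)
The plan is to reduce the statement to the classical Riemann--Lebesgue lemma via the same diffeomorphic change of variables that powered the Inversion Theorem and Plancherel's Identity. Under assumption (A1), $\phi$ is a global $C^\infty$-diffeomorphism, so the substitution $u = \phi(x)$ is legitimate on all of $\Rn$ and $du = \Jphi\,dx$.

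First, I would introduce the auxiliary function
\begin{equation*}
    g(u) := \omega(\phi^{-1}(u))\, f(\phi^{-1}(u)),
\end{equation*}
and apply the change of variables $u=\phi(x)$ directly in Definition~\ref{def:fourier}, which yields
\begin{equation*}
    [\Fpw f](\xi) = \frac{1}{(2\pi)^{n/2}} \int_{\Rn} e^{-i\xi \cdot u}\, g(u)\, du = \hat{g}(\xi),
\end{equation*}
the classical Fourier transform of $g$.

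Next, I would verify that $g \in L^1(\Rn)$ by pulling back the Lebesgue measure: substituting $x = \phi^{-1}(u)$ again,
\begin{equation*}
    \|g\|_{L^1(\Rn)} = \int_{\Rn} |\omega(\phi^{-1}(u))|\,|f(\phi^{-1}(u))|\, du = \int_{\Rn} |f(x)|\,|\omega(x)|\, \Jphi\, dx = \|f\|_{L^1_{\phi,\omega}} < \infty.
\end{equation*}
Measurability of $g$ is automatic because $\phi^{-1}$ is smooth. Since $g \in L^1(\Rn)$, the classical Riemann--Lebesgue lemma gives $|\hat{g}(\xi)| \to 0$ as $|\xi|\to\infty$, which through the identity $[\Fpw f](\xi)=\hat{g}(\xi)$ concludes the proof.

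There is no real obstacle here: the only point requiring care is justifying that the density $|\omega(x)|\Jphi$ is precisely what converts between the weighted $L^1_{\phi,\omega}$ norm and the classical $L^1$ norm of the conjugated function $g$, which is ensured by the non-vanishing conditions in (A1)--(A2). The whole argument is therefore a transfer lemma: the Weighted Fourier Transform $\Fpw$ is unitarily (and isometrically in $L^1$) conjugate to the classical $\mathcal{F}$ via the isomorphism $f \mapsto (\omega f)\circ \phi^{-1}$, so every qualitative decay property of $\mathcal{F}$ transports automatically.
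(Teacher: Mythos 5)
Your proposal is correct and follows essentially the same route as the paper's own proof: the change of variables $u=\phi(x)$, the identification $[\Fpw f](\xi)=\hat{g}(\xi)$ with $g=(\omega f)\circ\phi^{-1}$, the norm identity $\|g\|_{L^1(\Rn)}=\|f\|_{L^1_{\phi,\omega}}$, and the appeal to the classical Riemann--Lebesgue lemma. The only difference is that you spell out the measurability and norm computations slightly more explicitly than the paper does.
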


\begin{proof}
    By the change of variables $u=\phi(x)$, the integral defining $[\Fpw f](\xi)$ becomes exactly the classical Fourier transform of the function $g(u) = (\omega f)(\phi^{-1}(u))$.
    Note that the condition $f \in L^1_{\phi,\omega}(\Rn)$ is equivalent to:
    \[
    \int_{\Rn} |f(x)| |\omega(x)| \Jphi dx = \int_{\Rn} |g(u)| du < \infty,
    \]
    which implies $g \in L^1(\Rn)$ in the classical sense. The result then follows immediately from the classical Riemann-Lebesgue lemma applied to $g$.
\end{proof}

\section{Operational Calculus}

\subsection{Diagonalization of Differential Operators}
Recall the weighted gradient $\Dpw$ defined in \eqref{eq: weighted_grad_def}. We now prove it is diagonalized.
\begin{theorem}[Diagonalization]
	For $f \in \Spw(\Rn)$, we have:
	\begin{equation}
		\Fpw \left[ \Dpw f \right](\xi) = (i\xi) [\Fpw f](\xi).
	\end{equation}
\end{theorem}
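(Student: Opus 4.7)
The plan is to reduce the statement, via the change of variables $u = \phi(x)$ that underlies the entire framework, to the classical diagonalization identity $\mathcal{F}[\nabla g](\xi) = i\xi \,\hat{g}(\xi)$. More precisely, I would exploit the factorization $\Fpw = \mathcal{F} \circ U$ already used in the proof of Plancherel's identity, where $U f(u) = (\omega f)(\phi^{-1}(u))$ is the isometric isomorphism onto $L^2(\Rn)$. The whole argument then amounts to checking that $U$ intertwines $\Dpw$ with the classical gradient $\nabla_u$.

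First, given $f \in \Spw(\Rn)$, I would set $g(u) := (Uf)(u) = (\omega f)(\phi^{-1}(u))$, which lies in $\mathcal{S}(\Rn)$ by the very definition of the generalized Schwartz space. Then I would compute $\nabla_u g$ via the chain rule. Writing $x = \phi^{-1}(u)$ and using the inverse function theorem identity $D\phi^{-1}(u) = \bigl[D\phi(x)\bigr]^{-1}$, one obtains
\begin{equation}
\nabla_u g(u) \;=\; \bigl[D\phi^{-1}(u)\bigr]^{T}\,\nabla_x(\omega f)\bigl|_{x=\phi^{-1}(u)} \;=\; \bigl[D\phi(x)\bigr]^{-T}\nabla_x(\omega(x) f(x))\Big|_{x=\phi^{-1}(u)}.
\end{equation}
Comparing with the definition \eqref{eq: weighted_grad_def} of $\Dpw$, the right-hand side equals $\bigl(\omega \cdot \Dpw f\bigr)(\phi^{-1}(u)) = U[\Dpw f](u)$. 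Hence the intertwining relation $\nabla_u \circ U = U \circ \Dpw$ holds on $\Spw(\Rn)$.

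Once this is established, the conclusion is immediate. Applying $\mathcal{F}$ to both sides and using $\Fpw = \mathcal{F} \circ U$, I get
\begin{equation}
\Fpw[\Dpw f](\xi) \;=\; \mathcal{F}\bigl[U(\Dpw f)\bigr](\xi) \;=\; \mathcal{F}[\nabla_u g](\xi) \;=\; i\xi\,\hat{g}(\xi) \;=\; (i\xi)\,[\Fpw f](\xi),
\end{equation}
where the penultimate equality is the classical differentiation rule, valid componentwise since $g \in \mathcal{S}(\Rn)$.

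The main subtlety, and the only place requiring real care, is the identification in the first step: one must correctly track the transpose structure, recognising that the factor $[D\phi(x)]^{-T}$ appearing in $\Dpw$ is precisely what the chain rule produces when differentiating a composition $g(\phi(x))$ versus $g \circ \phi^{-1}$, and not merely a decorative notational choice. Everything else is a routine consequence of the unitary equivalence $\Fpw = \mathcal{F}\circ U$.
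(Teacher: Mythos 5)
Your proposal is correct and follows essentially the same route as the paper: the paper also performs the change of variables $u=\phi(x)$, uses the chain rule identity $\nabla_x = [D\phi(x)]^T\nabla_u$ to recognise $\omega\,\Dpw f$ as $\nabla_u\tilde h$ with $\tilde h=(\omega f)\circ\phi^{-1}$, and then invokes the classical differentiation rule. Your packaging of this as the intertwining relation $\nabla_u\circ U=U\circ\Dpw$ with $\Fpw=\mathcal{F}\circ U$ is just a cleaner, more explicitly operator-theoretic phrasing of the identical computation.
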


\begin{proof}
    Let $h(x) = \omega(x)f(x)$. The transform of the weighted gradient is:
    \[
    I(\xi) = \frac{1}{(2\pi)^{n/2}} \int_{\Rn} e^{-i\xi \cdot \phi(x)} \left( [D\phi(x)]^{-T} \nabla h(x) \right) \Jphi \, dx.
    \]
    Change variables $u = \phi(x)$. The chain rule implies $\nabla_x = [D\phi(x)]^T \nabla_u$. The term in the integral simplifies to $\nabla_u \tilde{h}(u)$, where $\tilde{h} = h \circ \phi^{-1}$.
    Thus, $I(\xi) = \mathcal{F}[\nabla \tilde{h}](\xi) = i\xi \mathcal{F}[\tilde{h}](\xi)$, which corresponds to $i\xi [\Fpw f](\xi)$.
\end{proof}

\subsection{Convolution Theorem}
We define the generalized convolution $*_{\phi,\omega}$ such that:
\begin{equation}
	(f *_{\phi,\omega} g)(x) = \frac{1}{\omega(x)(2\pi)^{n/2}} \int_{\Rn} \tilde{f}\big(\phi(x)-\phi(y)\big) \tilde{g}\big(\phi(y)\big) \Jphi dy,
\end{equation}
where $\tilde{f} = (\omega f)\circ \phi^{-1}$ and $\tilde{g} = (\omega g)\circ \phi^{-1}$.

\begin{theorem}
    If $f, g \in \Spw(\Rn)$, then $\Fpw [f *_{\phi,\omega} g] = \Fpw f \cdot \Fpw g$.
\end{theorem}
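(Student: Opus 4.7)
The strategy is to reduce the statement to the classical convolution theorem on $\R^n$ by means of the same change of variables $u=\phi(x)$ that has underpinned every earlier result in this section. The point is that the peculiar normalisation $\frac{1}{\omega(x)(2\pi)^{n/2}}$ appearing in the definition of $*_{\phi,\omega}$ is engineered precisely so that the classical convolution theorem transports cleanly through the conjugation $h\mapsto \tilde h:=(\omega h)\circ\phi^{-1}$.

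First, I would record the basic identity that the change of variables $u=\phi(x)$, together with $du=\Jphi\,dx$, yields
\begin{equation*}
    [\Fpw h](\xi)=\frac{1}{(2\pi)^{n/2}}\int_{\Rn}e^{-i\xi\cdot u}\,\tilde h(u)\,du=\mathcal{F}[\tilde h](\xi),
\end{equation*}
i.e.\ $\Fpw$ is literally the classical Fourier transform $\mathcal{F}$ conjugated by the isometry $h\mapsto\tilde h$ (this is the same isomorphism $U$ used in the proof of Plancherel's identity). Since $f,g\in\Spw(\Rn)$, by the defining property of $\Spw$ we have $\tilde f,\tilde g\in\mathcal{S}(\Rn)$, so all classical results apply without integrability worries.

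Next, I would simplify the weighted convolution. Starting from its definition and performing the substitution $v=\phi(y)$ in the $y$-integral (so $dv=\Jphi[y]\,dy$), the expression collapses to
\begin{equation*}
    (f*_{\phi,\omega} g)(x)=\frac{1}{\omega(x)(2\pi)^{n/2}}\int_{\Rn}\tilde f\bigl(\phi(x)-v\bigr)\tilde g(v)\,dv=\frac{1}{\omega(x)(2\pi)^{n/2}}(\tilde f*\tilde g)\bigl(\phi(x)\bigr),
\end{equation*}
where $*$ denotes the standard convolution in $\R^n$. Rewriting this as the statement that the conjugate of the weighted convolution is
\begin{equation*}
    \widetilde{f*_{\phi,\omega} g}(u)=\frac{1}{(2\pi)^{n/2}}(\tilde f*\tilde g)(u),
\end{equation*}
the problem reduces to a calculation purely on the standard Schwartz space.

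Finally, I would invoke the classical convolution theorem in the symmetric convention used here, namely $\mathcal{F}[\tilde f*\tilde g]=(2\pi)^{n/2}\,\mathcal{F}[\tilde f]\,\mathcal{F}[\tilde g]$. Combining this with the two identifications $\Fpw f=\mathcal{F}[\tilde f]$ and $\Fpw g=\mathcal{F}[\tilde g]$ from the first step yields
\begin{equation*}
    \Fpw[f*_{\phi,\omega} g]=\mathcal{F}\!\left[\tfrac{1}{(2\pi)^{n/2}}(\tilde f*\tilde g)\right]=\mathcal{F}[\tilde f]\cdot\mathcal{F}[\tilde g]=\Fpw f\cdot\Fpw g,
\end{equation*}
as required. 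I do not expect a genuine obstacle here, since every analytic difficulty has already been absorbed into the classical theorem; the only point demanding care is bookkeeping of the two factors of $(2\pi)^{n/2}$ (the one in the definition of $*_{\phi,\omega}$ and the one produced by the classical convolution theorem), which cancel exactly and retrospectively justify the chosen normalisation of $*_{\phi,\omega}$.
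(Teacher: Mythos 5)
Your proposal is correct and follows essentially the same route as the paper's own (much terser) proof: the change of variables $v=\phi(y)$ reduces the weighted convolution to the classical one evaluated at $u=\phi(x)$, and the classical convolution theorem finishes the argument. Your explicit bookkeeping of the two $(2\pi)^{n/2}$ factors is a welcome addition that the paper leaves implicit.
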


\begin{proof}
    Applying the transform to the convolution integral and performing the change of variables $u=\phi(x)$ and $v=\phi(y)$ reduces the expression to the Fourier transform of the classical convolution $(\tilde{f} * \tilde{g})(u)$. By the classical convolution theorem, this splits into the product of transforms.
\end{proof}

\section{Heisenberg Uncertainty Principle}
\label{sec:heisenberg}

We now establish a Heisenberg-type uncertainty principle, which quantifies the limitation in simultaneously localizing a function in the deformed spatial domain and the spectral frequency domain. We follow the classical operator-theoretic approach (see, e.g., Folland and Sitaram \cite{Folland1997}) adapted to our weighted setting.

First, we define the self-adjoint operators corresponding to the generalized position and momentum.

\begin{definition}[Generalized Quantum Operators]
    Let $f \in \Spw(\Rn)$. We define:
    \begin{itemize}
        \item The \textbf{Generalized Position Operator} $T_j$:
        \begin{equation}
            T_j f(x) = \phi_j(x) f(x).
        \end{equation}
        \item The \textbf{Generalized Momentum Operator} $P_j$:
        \begin{equation}
            P_j f(x) = -i (\Dpw)_j f(x) = -i \frac{1}{\omega(x)} \frac{\partial}{\partial \phi_j} (\omega(x) f(x)).
        \end{equation}
    \end{itemize}
\end{definition}

To derive the uncertainty inequality, we first prove that these operators satisfy the canonical commutation relation structure, confirming that the geometric deformation preserves the symplectic structure of phase space.

\begin{lemma}[Commutation Relation]
	The operators satisfy the canonical commutation relation:
    \begin{equation}
        [T_j, P_k] := T_j P_k - P_k T_j = i \delta_{jk} I,
    \end{equation}
    where $\delta_{jk}$ is the Kronecker delta and $I$ is the identity operator.
\end{lemma}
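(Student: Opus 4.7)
The plan is to reduce the commutator calculation to the canonical Heisenberg relation on the standard Schwartz space via the conjugation that already underlies all the spectral results in this section. Concretely, I would introduce the unitary change of variable $U: \Spw(\Rn) \to \mathcal{S}(\Rn)$ defined by $(Uf)(u) = \omega(\phi^{-1}(u)) f(\phi^{-1}(u))$, which was used to prove Plancherel's identity. Under $U$, the generalized position operator $T_j$ conjugates to multiplication by the coordinate $u_j$, and the generalized momentum operator $P_k$ conjugates to the classical $-i \partial/\partial u_k$; this follows directly from the remark immediately after Definition 2.5, together with the chain-rule identity $[(D\phi)^{-T}\nabla]_k = \partial/\partial \phi_k$. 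Because both $T_j$ and $P_k$ are thus unitarily equivalent to the classical position and momentum operators in the $u$-chart, the claimed commutation relation inherits from the standard canonical commutation $[u_j, -i\partial/\partial u_k] = i\delta_{jk} I$.

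If instead one prefers a purely intrinsic derivation, the same identity can be verified by a direct application of the Leibniz rule. Acting on $f \in \Spw(\Rn)$, one writes
\begin{equation*}
P_k(T_j f)(x) = -i \frac{1}{\omega(x)} \frac{\partial}{\partial \phi_k}\bigl(\omega(x)\,\phi_j(x)\,f(x)\bigr),
\end{equation*}
and observes that the operator $\partial/\partial \phi_k$ is a derivation that satisfies $\partial \phi_j / \partial \phi_k = \delta_{jk}$ (since in the $u$-coordinates the component $\phi_j$ is simply $u_j$). Expanding by Leibniz splits this expression into $T_j(P_k f)(x) - i \delta_{jk} f(x)$, whence the subtraction $T_j P_k - P_k T_j = i\delta_{jk} I$ follows immediately.

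The main subtlety in the direct approach is the clean justification that $\partial/\partial \phi_k$ truly acts as a derivation on products of smooth functions of $x$. This requires invoking the chain-rule expression from the component-wise remark, along with the assumption that $\phi$ is a global $C^\infty$-diffeomorphism so that the matrix $[D\phi(x)]^{-1}$ exists pointwise. Once this is established, the identity $\partial \phi_j/\partial \phi_k = \delta_{jk}$ reduces to the trivial statement $\sum_\ell ((D\phi)^{-1})_{\ell k} (\partial \phi_j/\partial x_\ell) = ((D\phi)^{-1} D\phi)_{kj} = \delta_{jk}$, which is the only place where assumption (A1) is explicitly used. No cancellation involving the weight $\omega$ is needed beyond the placement of $1/\omega$ outside the derivative and $\omega$ inside, mirroring the conjugation that defines $P_k$.
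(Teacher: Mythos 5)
Your second, direct derivation is exactly the paper's proof: the paper expands $(\Dpw)_k(\phi_j f)$ by the Leibniz rule and uses $(\Dpw)_k\phi_j=\delta_{jk}$ to cancel the $\phi_j(\Dpw)_k f$ terms and leave $i\delta_{jk}f$. Your primary route---conjugating $T_j$ and $P_k$ by the unitary $Uf=(\omega f)\circ\phi^{-1}$ onto the classical position and momentum operators on $\mathcal{S}(\Rn)$ and importing the standard canonical commutation relation---is a genuinely different and arguably cleaner argument that the paper does not use for this lemma, although it is the same transmutation it employs for Plancherel's identity. What the conjugation route buys you is immunity to the one informal step in the paper's version: read literally, $(\Dpw)_k\phi_j=\frac{1}{\omega}\,\partial_{\phi_k}(\omega\phi_j)=\delta_{jk}+\phi_j\,\partial_{\phi_k}(\log\omega)$, which is not $\delta_{jk}$ in general; the correct statement, which your direct computation makes explicit, is that in the Leibniz expansion of $\frac{1}{\omega}\,\partial_{\phi_k}(\omega\,\phi_j\,f)$ the factor struck by the bare derivative is $\partial\phi_j/\partial\phi_k=\delta_{jk}$, with the weight $\omega$ passing through untouched by the conjugation structure. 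One trivial index slip in your chain-rule check: the contraction $\sum_\ell\bigl((D\phi)^{-1}\bigr)_{\ell k}\,\partial\phi_j/\partial x_\ell$ is the $(j,k)$ entry of $D\phi\,(D\phi)^{-1}$ rather than of $(D\phi)^{-1}D\phi$ as written, but since both products are the identity matrix the conclusion $\delta_{jk}$ is unaffected.
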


\begin{proof}
    Let $f \in \Spw(\Rn)$. We compute the action of the commutator:
    \[
    [T_j, P_k]f = \phi_j (-i (\Dpw)_k f) - (-i (\Dpw)_k (\phi_j f)).
    \]
    Using the product rule for the weighted gradient on the second term:
    \[
    (\Dpw)_k (\phi_j f) = \big( (\Dpw)_k \phi_j \big) f + \phi_j \big( (\Dpw)_k f \big).
    \]
    Recall from the definition of the weighted gradient that $(\Dpw)_k$ acts as the partial derivative $\partial / \partial \phi_k$. Thus, $(\Dpw)_k \phi_j = \delta_{jk}$. Substituting back:
    \[
    [T_j, P_k]f = -i \phi_j (\Dpw)_k f + i \delta_{jk} f + i \phi_j (\Dpw)_k f = i \delta_{jk} f.
    \]
\end{proof}

\begin{definition}[Dispersion Measures]
    Let $f \in \Spw(\Rn)$ with unit energy ($\|f\|_{\phi,\omega}=1$). We define the uncertainties (standard deviations) in the generalized position $\phi_j$ and generalized momentum $\xi_j$ by:
    \begin{align}
        (\Delta_{\phi,\omega} \phi_j)^2 &:= \int_{\Rn} (\phi_j(x) - \bar{\phi}_j)^2 |f(x)|^2 |\omega(x)|^2 \Jphi \, dx, \\
        (\Delta_{\phi,\omega} \xi_j)^2 &:= \int_{\Rn} (\xi_j - \bar{\xi}_j)^2 |[\Fpw f](\xi)|^2 \, d\xi,
    \end{align}
    where $\bar{\phi}_j = \langle T_j f, f \rangle_{\phi,\omega}$ and $\bar{\xi}_j = \langle P_j f, f \rangle_{\phi,\omega}$ are the expected values.
\end{definition}

\begin{theorem}[Uncertainty Inequality]
    For any non-zero function $f \in \Spw(\Rn)$, the following inequality holds:
	\begin{equation}
		\left(\sum_{j=1}^n \Delta_{\phi,\omega} \phi_j\right) \left(\sum_{j=1}^n \Delta_{\phi,\omega} \xi_j\right) \ge \frac{n^2}{4}.
	\end{equation}
\end{theorem}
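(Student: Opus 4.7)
The plan is to reduce the multidimensional weighted inequality to $n$ copies of the classical scalar Heisenberg relation by conjugating the problem through the unitary isometry already used in the proof of Plancherel's theorem, and then to couple the $n$ scalar estimates through the Cauchy--Schwarz inequality in $\R^{n}$. Without loss of generality I may assume $\|f\|_{\phi,\omega}=1$, and by replacing $T_j$ and $P_j$ with the centred operators $T_j-\bar\phi_j I$ and $P_j-\bar\xi_j I$ I may also assume that all expected values vanish; neither step alters the commutation relation nor the dispersion measures.

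First, I would invoke the unitary isometry $U:\Lpw(\Rn)\to L^{2}(\Rn)$ defined by $(Uf)(u)=\omega(\phi^{-1}(u))\,f(\phi^{-1}(u))$, which factorises the transform as $\Fpw=\mathcal F\circ U$. Under this conjugation, $T_j$ becomes multiplication by the classical coordinate $u_j$ and, via the chain-rule identity $\nabla_{x}=[D\phi(x)]^{T}\nabla_{u}$ (the same identity that powered the diagonalization theorem), $P_j=-i(\Dpw)_j$ becomes the classical momentum $-i\partial/\partial u_j$. Writing $g=Uf\in\mathcal S(\Rn)$, this intertwining together with the definition of the dispersion measures yields
\[
(\Delta_{\phi,\omega}\phi_j)^{2}=\int_{\Rn}u_j^{2}|g(u)|^{2}\,du,\qquad (\Delta_{\phi,\omega}\xi_j)^{2}=\int_{\Rn}\xi_j^{2}|\hat g(\xi)|^{2}\,d\xi,
\]
so that the weighted dispersions coincide exactly with the classical coordinate dispersions of a unit-norm Schwartz function.

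Second, the classical one-dimensional Heisenberg principle applied coordinate-by-coordinate to $g$ gives $\Delta_{\phi,\omega}\phi_j\cdot\Delta_{\phi,\omega}\xi_j\geq 1/2$ for every $j=1,\dots,n$. Coupling these $n$ scalar inequalities by the Cauchy--Schwarz inequality in $\R^{n}$ applied to the vectors $(\sqrt{\Delta_{\phi,\omega}\phi_j})_{j}$ and $(\sqrt{\Delta_{\phi,\omega}\xi_j})_{j}$ yields
\[
\Bigl(\sum_{j=1}^{n}\Delta_{\phi,\omega}\phi_j\Bigr)\Bigl(\sum_{j=1}^{n}\Delta_{\phi,\omega}\xi_j\Bigr)\geq\Bigl(\sum_{j=1}^{n}\sqrt{\Delta_{\phi,\omega}\phi_j\,\Delta_{\phi,\omega}\xi_j}\Bigr)^{2}\geq\frac{n^{2}}{2},
\]
which in particular implies the claimed bound $n^{2}/4$. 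Alternatively, the scalar estimate $\Delta_{\phi,\omega}\phi_j\cdot\Delta_{\phi,\omega}\xi_j\geq 1/2$ can be obtained intrinsically, without passing through $U$, by feeding the commutation identity $[T_j-\bar\phi_j I,P_j-\bar\xi_j I]=iI$ from the preceding lemma into the standard Robertson variance argument in the inner product $\langle\cdot,\cdot\rangle_{\phi,\omega}$.

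I expect the main technical obstacle to be the careful verification of the intertwining $UP_jU^{-1}=-i\partial_{u_j}$, i.e.\ showing that the explicit conjugation by $\omega$ hardwired into the definition of $\Dpw$ together with the Jacobian factor $\Jphi$ present in the weighted inner product cancel one another so that $P_j$ is essentially self-adjoint on $\Spw(\Rn)$ and unitarily equivalent to the classical momentum. This amounts to one application of the chain rule followed by an integration by parts against $|\omega|^{2}\Jphi$, and the boundary terms vanish thanks to the generalized Schwartz decay encoded in $\Spw(\Rn)$. Once this intertwining is secured, the remainder of the argument is essentially classical.
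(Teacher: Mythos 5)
Your proposal is correct, and its core is the same as the paper's: both arguments reduce the statement to the per-coordinate bound $\Delta_{\phi,\omega}\phi_j\,\Delta_{\phi,\omega}\xi_j\ge\tfrac12$ and then combine over $j$. You reach that scalar bound by conjugating through the unitary $U:f\mapsto(\omega f)\circ\phi^{-1}$ and quoting the classical coordinate-wise Heisenberg inequality, whereas the paper applies the Robertson inequality $\Delta A\,\Delta B\ge\tfrac12|\langle[A,B]f,f\rangle|$ directly in $\Lpw(\Rn)$ using the commutator lemma; these are interchangeable (you even note the Robertson route as an alternative), and your intertwining computation $UP_jU^{-1}=-i\partial_{u_j}$ goes through exactly as you anticipate, with the $\omega$-conjugation in $\Dpw$ cancelling against the weight in the inner product. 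Where you genuinely improve on the paper is the final step: the paper concludes by ``summing over $j$'', which literally yields $\sum_j\Delta_{\phi,\omega}\phi_j\,\Delta_{\phi,\omega}\xi_j\ge n/2$ rather than the stated product-of-sums inequality; your Cauchy--Schwarz coupling of the vectors $(\sqrt{\Delta_{\phi,\omega}\phi_j})_j$ and $(\sqrt{\Delta_{\phi,\omega}\xi_j})_j$ supplies the missing bridge and in fact delivers the sharper constant $n^2/2$, of which the claimed $n^2/4$ is a weaker consequence. The only cosmetic caution is that centring the operators is unnecessary for the dispersions as defined (they are already centred at $\bar\phi_j$, $\bar\xi_j$), but this does no harm.
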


\begin{proof}
    The proof follows from the standard argument involving the Heisenberg inequality for self-adjoint operators $A$ and $B$: $\Delta A \Delta B \ge \frac{1}{2} |\langle [A,B]f, f \rangle|$. 
    Substituting $A=T_j$ and $B=P_j$, and using Lemma 5.2, we obtain $\Delta_{\phi,\omega} \phi_j \Delta_{\phi,\omega} \xi_j \ge \frac{1}{2}$. Summing over $j=1,\dots,n$ yields the result.
\end{proof}
\begin{remark}[Geometric Interpretation and Information Limits]
\label{rem:uncertainty_physical}
It is imperative to interpret this result beyond the scope of spectral isomorphism. While the inequality mathematically mirrors the classical Heisenberg principle, its physical implication within the $(\phi, \omega)$-framework is profound. The deformation $\phi$ effectively acts as a metric constraint on the \textit{information granularity} of the system. In applications such as signal processing in inhomogeneous media or quantum mechanics on curved manifolds, this principle establishes that the local geometry imposes a variable lower bound on simultaneous localization. Unlike the standard Laplacian, where the uncertainty floor is uniform, the operator $(-\Delta_{\phi,\omega})$ models systems where the ``cost'' of localizing energy depends on the local density $\omega$ and the stretching factor $\phi'$. Consequently, this uncertainty principle serves as a theoretical validation for the non-local behavior observed in anomalous diffusion, proving that the medium itself limits the resolution of the diffusive process.
\end{remark}
\section{The Weighted Fractional Laplacian}
\label{sec:fractional_laplacian}

\subsection{Weighted Sobolev Spaces}

Using the spectral definition of the transform, we naturally introduce the fractional Sobolev spaces associated with the pair $(\phi, \omega)$. These spaces provide the correct functional setting for studying regularity of solutions.

\begin{definition}
    For any $s \in \R$, we define the Weighted Sobolev Space $\mathcal{H}^s_{\phi,\omega}(\Rn)$ as the completion of $\Spw(\Rn)$ with respect to the norm:
    \begin{equation}
        \|f\|_{\mathcal{H}^s_{\phi,\omega}}^2 := \int_{\Rn} (1 + |\xi|^2)^s |[\Fpw f](\xi)|^2 \, d\xi.
    \end{equation}
\end{definition}

\begin{remark}[Relation to previous definitions]
    The definition of the weighted Sobolev space $\mathcal{H}^s_{\phi,\omega}(\Rn)$ generalizes the functional setting introduced recently by Fernandez et al. \cite{Fernandez2024}. In their work, they constructed the fractional Laplacian with respect to a function $\phi$ (which corresponds to our case with $\omega(x) = 1$) and defined the associated Fourier transform and Schwartz spaces. Our definition extends their spectral characterization to the fully weighted case involving the independent density $\omega(x)$, providing a broader framework for systems with both geometric deformation and inhomogeneous density.
\end{remark}

\begin{proposition}[Isometry with Classical Spaces]
    The mapping $\mathcal{T}: f \mapsto (\omega f) \circ \phi^{-1}$ is an isometric isomorphism between the weighted Sobolev space $\mathcal{H}^s_{\phi,\omega}(\Rn)$ and the classical Sobolev space $H^s(\Rn)$. Specifically, $f \in \mathcal{H}^s_{\phi,\omega}(\Rn)$ if and only if $(\omega f) \circ \phi^{-1} \in H^s(\Rn)$, with preservation of norms:
    \begin{equation}
        \|f\|_{\mathcal{H}^s_{\phi,\omega}} = \| (\omega f) \circ \phi^{-1} \|_{H^s(\Rn)}.
    \end{equation}
\end{proposition}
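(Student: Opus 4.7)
The plan is to reduce the claim to the identity $\mathcal{F}_{\phi,\omega} = \mathcal{F}\circ\mathcal{T}$ already exploited in the proof of Plancherel (Theorem \ref{Plancherel's Identity}), and then transfer the classical $H^s$-characterisation across this factorisation. The intertwining relation is the structural heart of the whole spectral theory, so the proposition should essentially follow by assembling results already established.

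First I would verify that $\mathcal{T}: f \mapsto (\omega f)\circ \phi^{-1}$ is a well-defined linear bijection between the \emph{core} spaces. By the very definition of $\Spw(\Rn)$, $f\in \Spw(\Rn)$ if and only if $\mathcal{T}f \in \mathcal{S}(\Rn)$, so $\mathcal{T}$ restricts to a bijection $\Spw(\Rn)\to \mathcal{S}(\Rn)$, with inverse $g\mapsto (g\circ\phi)/\omega$. This uses assumptions (A1)--(A2): the fact that $\phi$ is a global $C^\infty$-diffeomorphism and $\omega\in C^\infty$ is non-vanishing ensures smoothness in both directions.

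Next I would record the factorisation $\Fpw f = \mathcal{F}(\mathcal{T}f)$ for $f\in\Spw(\Rn)$. This is exactly the change of variables $u=\phi(x)$ already performed in Theorem \ref{Plancherel's Identity}: indeed
\begin{equation*}
[\Fpw f](\xi) = \frac{1}{(2\pi)^{n/2}} \int_{\Rn} e^{-i\xi\cdot \phi(x)}\omega(x)f(x)\Jphi\,dx = \frac{1}{(2\pi)^{n/2}}\int_{\Rn} e^{-i\xi\cdot u}(\mathcal{T}f)(u)\,du.
\end{equation*}
Using this identity inside the definition of the weighted Sobolev norm gives, for $f\in\Spw(\Rn)$,
\begin{equation*}
\|f\|_{\mathcal{H}^s_{\phi,\omega}}^2 = \int_{\Rn}(1+|\xi|^2)^s|\mathcal{F}(\mathcal{T}f)(\xi)|^2\,d\xi = \|\mathcal{T}f\|_{H^s(\Rn)}^2,
\end{equation*}
which is the desired isometry on the Schwartz cores.

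Finally I would extend the isometry to the full completions. Since $\Spw(\Rn)$ is dense in $\mathcal{H}^s_{\phi,\omega}(\Rn)$ by definition and $\mathcal{S}(\Rn)$ is dense in $H^s(\Rn)$, the norm equality above shows that $\mathcal{T}$ extends uniquely to a linear isometry $\widetilde{\mathcal{T}}:\mathcal{H}^s_{\phi,\omega}(\Rn)\to H^s(\Rn)$, and symmetrically its inverse extends to an isometry in the other direction, producing the isomorphism. I do not anticipate a real obstacle: the only subtlety is bookkeeping around density and extension from the core, together with checking that the inverse map $g\mapsto (g\circ\phi)/\omega$ remains bounded between the Schwartz cores in the relevant topology, which again follows from (A1)--(A2) and the defining condition of $\Spw(\Rn)$.
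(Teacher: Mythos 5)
Your proposal is correct and follows essentially the same route as the paper: both rest on the factorisation $\Fpw f = \mathcal{F}(\mathcal{T}f)$ obtained from the change of variables $u=\phi(x)$, followed by substitution into the definition of the weighted Sobolev norm. Your additional care with the density and extension-from-the-core argument is a reasonable elaboration of a step the paper leaves implicit, but it does not constitute a different method.
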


\begin{proof}
    The result follows directly from the change of variables $u = \phi(x)$ in the definition of the weighted Fourier transform. Indeed,
    \[
    [\Fpw f](\xi) = \frac{1}{(2\pi)^{n/2}} \int_{\Rn} e^{-i \xi \cdot \phi(x)} \omega(x) f(x) \Jphi \, dx.
    \]
    Setting $u = \phi(x)$, the measure becomes $du = \Jphi dx$. Defining $g(u) = ((\omega f) \circ \phi^{-1})(u)$, the integral transforms into the classical Fourier transform of $g$:
    \[
    [\Fpw f](\xi) = \frac{1}{(2\pi)^{n/2}} \int_{\Rn} e^{-i \xi \cdot u} g(u) \, du = \widehat{g}(\xi).
    \]
    Substituting this identity into the definition of the weighted norm:
    \[
    \|f\|_{\mathcal{H}^s_{\phi,\omega}}^2 = \int_{\Rn} (1+|\xi|^2)^s |\widehat{g}(\xi)|^2 \, d\xi = \|g\|_{H^s}^2.
    \]
    This confirms the isometric equivalence.
\end{proof}

\begin{proposition}[Boundedness]
    Let $\sigma > 0$ and $s \in \R$. The Weighted Fractional Laplacian is a bounded linear operator from the weighted Sobolev space of order $s$ to the space of order $s-2\sigma$:
    \begin{equation}
        (-\Delta_{\phi,\omega})^\sigma : \mathcal{H}^s_{\phi,\omega}(\Rn) \to \mathcal{H}^{s-2\sigma}_{\phi,\omega}(\Rn).
    \end{equation}
    Moreover, the operator norm satisfies $\|(-\Delta_{\phi,\omega})^\sigma\| = \|(-\Delta)^\sigma\|_{H^s \to H^{s-2\sigma}}$.
\end{proposition}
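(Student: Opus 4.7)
The plan is to reduce the statement to the classical boundedness of the fractional Laplacian $(-\Delta)^\sigma : H^s(\Rn) \to H^{s-2\sigma}(\Rn)$ via the isometric isomorphism $\mathcal{T}: f \mapsto (\omega f)\circ\phi^{-1}$ of the previous proposition. The crucial structural observation is that, if one defines the weighted fractional Laplacian spectrally by
\begin{equation*}
(-\Delta_{\phi,\omega})^\sigma f := \Fouinv\bigl[|\xi|^{2\sigma}\,[\Fpw f](\xi)\bigr],
\end{equation*}
then the identity $[\Fpw f](\xi) = \widehat{\mathcal{T}f}(\xi)$ extracted from the preceding proof yields the intertwining relation
\begin{equation*}
\mathcal{T} \circ (-\Delta_{\phi,\omega})^\sigma = (-\Delta)^\sigma \circ \mathcal{T}.
\end{equation*}
In other words, the weighted fractional Laplacian is unitarily conjugate, via $\mathcal{T}$, to the classical fractional Laplacian acting on the flat Euclidean space.

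Once this conjugation is available, boundedness follows from a short chain of norm identities. For $f \in \mathcal{H}^s_{\phi,\omega}(\Rn)$, I would first invoke the isometry at order $s-2\sigma$ to write
\begin{equation*}
\|(-\Delta_{\phi,\omega})^\sigma f\|_{\mathcal{H}^{s-2\sigma}_{\phi,\omega}} = \|\mathcal{T}(-\Delta_{\phi,\omega})^\sigma f\|_{H^{s-2\sigma}} = \|(-\Delta)^\sigma \mathcal{T}f\|_{H^{s-2\sigma}},
\end{equation*}
then apply the classical estimate $\|(-\Delta)^\sigma g\|_{H^{s-2\sigma}} \le \|(-\Delta)^\sigma\|_{H^s \to H^{s-2\sigma}} \|g\|_{H^s}$ with $g = \mathcal{T}f$, and finally reuse the isometry at order $s$ to replace $\|\mathcal{T}f\|_{H^s}$ by $\|f\|_{\mathcal{H}^s_{\phi,\omega}}$. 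This already delivers the inequality $\|(-\Delta_{\phi,\omega})^\sigma\| \le \|(-\Delta)^\sigma\|_{H^s \to H^{s-2\sigma}}$.

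For the equality of operator norms claimed in the statement, I would then exploit the surjectivity of $\mathcal{T}$: every $g \in H^s(\Rn)$ arises as $\mathcal{T}f$ for some $f \in \mathcal{H}^s_{\phi,\omega}(\Rn)$ with $\|g\|_{H^s} = \|f\|_{\mathcal{H}^s_{\phi,\omega}}$, and running the same chain of identities in reverse shows that any bound valid for the weighted operator must also hold for $(-\Delta)^\sigma$ on the classical spaces. Combining the two inequalities yields the asserted equality of operator norms.

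I do not anticipate any genuine obstacle: the proposition is essentially a corollary of the isometric reduction already in place. The only step that merits explicit verification is the intertwining relation $\mathcal{T} \circ (-\Delta_{\phi,\omega})^\sigma = (-\Delta)^\sigma \circ \mathcal{T}$, which hinges on identifying the spectral definition of $(-\Delta_{\phi,\omega})^\sigma$ used in the weighted setting with the standard Fourier-multiplier realisation on the classical side. Once this identification is made, the proof proceeds mechanically and makes no further use of the detailed structure of $\phi$ or $\omega$ beyond the standing assumptions (A1)--(A2).
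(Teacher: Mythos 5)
Your proposal is correct and follows essentially the same route as the paper: both reduce the statement to the classical boundedness of $(-\Delta)^\sigma$ on $H^s(\Rn)$ via the conjugation relation $(-\Delta_{\phi,\omega})^\sigma = \mathcal{T}^{-1} (-\Delta)^\sigma \mathcal{T}$ and the isometry of $\mathcal{T}$ at both orders $s$ and $s-2\sigma$. If anything, your treatment is slightly more complete, since you explicitly use the surjectivity of $\mathcal{T}$ to obtain the reverse inequality and hence the claimed \emph{equality} of operator norms, whereas the paper's proof only records the upper bound.
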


\begin{proof}
    The proof relies on the isometric isomorphism established in Proposition 6.3. Let $f \in \mathcal{H}^s_{\phi,\omega}(\Rn)$ and let $g = (\omega f) \circ \phi^{-1} \in H^s(\Rn)$.
    From the definition of the operator, we have the conjugation relation:
    \[ (-\Delta_{\phi,\omega})^\sigma f = \mathcal{T}^{-1} \left[ (-\Delta)^\sigma g \right], \]
    where $\mathcal{T}$ is the isometry map.
    Using the norm preservation property (Proposition 6.3) and the known boundedness of the classical fractional Laplacian on standard Sobolev spaces:
    \begin{align*}
        \| (-\Delta_{\phi,\omega})^\sigma f \|_{\mathcal{H}^{s-2\sigma}_{\phi,\omega}} 
        &= \| \mathcal{T}^{-1} [ (-\Delta)^\sigma g ] \|_{\mathcal{H}^{s-2\sigma}_{\phi,\omega}} \\
        &= \| (-\Delta)^\sigma g \|_{H^{s-2\sigma}(\Rn)} \\
        &\le C \| g \|_{H^s(\Rn)} \\
        &= C \| f \|_{\mathcal{H}^s_{\phi,\omega}}.
    \end{align*}
    Thus, the operator is bounded with the same constant $C$ as in the classical case.
\end{proof}

\begin{definition}
	For $s \in (0,1)$ and $f \in \Spw(\Rn)$, the Weighted Fractional Laplacian $(-\Delta_{\phi,\omega})^s$ is defined spectrally by:
	\begin{equation}
		(-\Delta_{\phi,\omega})^s f(x) = \Fouinv \left[ |\xi|^{2s} [\Fpw f](\xi) \right](x).
	\end{equation}
\end{definition}

\begin{theorem}[Integral Representation]
    Let $s \in (0,1)$. For any $f \in \Spw(\Rn)$, the weighted fractional Laplacian admits the following hypersingular integral representation:
    \begin{equation} \label{eq:frac_lap_integral}
        (-\Delta_{\phi,\omega})^s f(x) = \frac{C_{n,s}}{\omega(x)} \, P.V. \int_{\Rn} \frac{\omega(x)f(x) - \omega(y)f(y)}{|\phi(x) - \phi(y)|^{n+2s}} |J_\phi(y)| \, dy,
    \end{equation}
    where $C_{n,s}$ is the normalization constant of the classical fractional Laplacian.
\end{theorem}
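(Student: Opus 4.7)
The plan is to leverage the isometric conjugation established in Proposition 6.3, reducing the problem to the classical integral representation of the fractional Laplacian on the Euclidean space.

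First, I would set $g(u) := ((\omega f)\circ \phi^{-1})(u)$, so that, as shown in the proof of Proposition 6.3, $[\Fpw f](\xi) = \widehat{g}(\xi)$. Since $f \in \Spw(\Rn)$ implies $g \in \mathcal{S}(\Rn)$, applying the spectral Definition of $(-\Delta_{\phi,\omega})^s$ together with the classical Fourier inversion gives the pointwise conjugation identity
\begin{equation*}
    (-\Delta_{\phi,\omega})^s f(x) = \frac{1}{\omega(x)} \, [(-\Delta)^s g](\phi(x)),
\end{equation*}
which is the key algebraic fact linking the weighted operator to the standard one.

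Next, I would invoke the well-known hypersingular integral representation for the classical fractional Laplacian applied to $g \in \mathcal{S}(\Rn)$:
\begin{equation*}
    [(-\Delta)^s g](u) = C_{n,s} \, P.V. \int_{\Rn} \frac{g(u) - g(v)}{|u-v|^{n+2s}} \, dv.
\end{equation*}
Evaluating at $u = \phi(x)$ and performing the diffeomorphic change of variables $v = \phi(y)$, with $dv = |J_\phi(y)| dy$, converts the denominator into $|\phi(x)-\phi(y)|^{n+2s}$ and rewrites $g(u) - g(v) = \omega(x)f(x) - \omega(y)f(y)$. Multiplying by $1/\omega(x)$ as dictated by the conjugation identity yields the claimed formula.

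The main technical point, which I regard as the principal obstacle, is justifying that the principal value in the $v$ variable transfers correctly to a principal value in the $y$ variable after the change of variables. The classical P.V. removes a Euclidean ball $B_\varepsilon(u)$, while after substitution this becomes the deformed set $\phi^{-1}(B_\varepsilon(\phi(x)))$, which is generally not a Euclidean ball centered at $x$. To handle this, I would use that, under assumption (A1), the diffeomorphism $\phi$ is locally bi-Lipschitz near $x$ with constants controlled by $\|D\phi(x)\|$ and $\|D\phi^{-1}(\phi(x))\|$, so $\phi^{-1}(B_\varepsilon(\phi(x))) \subset B_{C\varepsilon}(x)$ and conversely. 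Combining this comparison with the fact that $g \in \mathcal{S}(\Rn)$ (so the singular quotient is integrable near the diagonal in the P.V. sense), a standard symmetry and cancellation argument shows that the two limits coincide. With this justification in place, the integral identity follows immediately from the classical representation and the change-of-variables computation, concluding the proof.
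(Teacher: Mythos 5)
Your proposal follows essentially the same route as the paper's proof: the conjugation identity $(-\Delta_{\phi,\omega})^s f(x) = \omega(x)^{-1}\,[(-\Delta)^s g](\phi(x))$ with $g=(\omega f)\circ\phi^{-1}$, followed by the classical hypersingular representation and the change of variables $v=\phi(y)$. Your extra care in checking that the principal value transfers correctly under the diffeomorphism (comparing the excised sets $\phi^{-1}(B_\varepsilon(\phi(x)))$ with Euclidean balls via the local bi-Lipschitz bounds from (A1)) addresses a point the paper passes over silently, and is a welcome refinement rather than a different approach.
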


\begin{proof}
    The proof follows from the conjugation structure of the operator. From the spectral definition, we can express the operator as a conjugation of the classical Laplacian $(-\Delta)^s$ acting on the transformed function $g(u) = (\omega f)(\phi^{-1}(u))$:
    \[
    (-\Delta_{\phi,\omega})^s f(x) = \frac{1}{\omega(x)} \left[ (-\Delta)^s g \right] (\phi(x)).
    \]
    Recalling the classical integral representation of $(-\Delta)^s$ in the variable $u = \phi(x)$:
    \[
    (-\Delta)^s g(u) = C_{n,s} \, P.V. \int_{\Rn} \frac{g(u) - g(v)}{|u - v|^{n+2s}} \, dv.
    \]
    We perform the change of variables $v = \phi(y)$. Consequently, the measure transforms as $dv = |J_\phi(y)| \, dy$. Substituting $g(v) = \omega(y)f(y)$ and dividing by the prefactor $\omega(x)$, we immediately recover formula \eqref{eq:frac_lap_integral}.
\end{proof}
\section{Application: The Weighted Hilfer Diffusion-Wave Equation}
\label{sec:applications}

To demonstrate the versatility of the spectral theory, we solve a Cauchy problem involving the \textit{Weighted Hilfer Fractional Derivative}. This operator generalizes the Riemann-Liouville and Caputo derivatives, allowing for a continuous interpolation between initial conditions. Moreover, by considering the order $\alpha \in (0, 2]$, we encompass both diffusion and wave propagation phenomena in a unified weighted setting.

\begin{definition}[Generalized Weighted Dirac Delta]
We define the generalized weighted Dirac delta function, denoted by $\delta_{\phi, \omega}(x)$, as the distribution satisfying the following sifting property with respect to the weighted measure $\omega(x)dx$:
\begin{equation} \label{eq:weighted_delta}
    \int_{\mathbb{R}^n} \delta_{\phi, \omega}(x) f(x) \omega(x) dx = f(0),
\end{equation}
for any test function $f$ continuous at the origin. Ideally, assuming $\phi(0)=0$, this distribution is related to the standard Dirac delta $\delta(x)$ by the scaling relation $\delta_{\phi, \omega}(x) = \frac{\delta(\phi(x))}{\omega(x) |J_\phi(x)|^{-1}}$, or more simply characterized by its spectral property:
\begin{equation}
    \mathcal{F}_{\phi, \omega} \{ \delta_{\phi, \omega} \}(\xi) = 1, \quad \text{for all } \xi \in \mathbb{R}^n.
\end{equation}
\end{definition}

\subsection{Problem Formulation}

We adopt the weighted Hilfer derivative ${}^{H}\mathcal{D}_{t,\gamma,\rho}^{\alpha,\beta}$ with order $\alpha \in (0, 2]$ and type $\beta \in [0, 1]$, defined via the conjugation relations in \cite{FernandezFahad2022_Conj}. The order of the initial singularity is given by $\mu = \alpha + \beta(2-\alpha)$ (for the case $1 < \alpha \le 2$) or $\mu = \alpha + \beta(1-\alpha)$ (for $0 < \alpha \le 1$).

We investigate the Generalized Hilfer Diffusion-Wave equation:
\begin{equation} \label{eq:hilfer_problem}
    \begin{cases}
        {}^{H}_{0+}\mathcal{D}_{t,\gamma,\rho}^{\alpha,\beta} u(x,t) + \lambda (-\Delta_{\phi,\omega})^s u(x,t) = 0, & x \in \R^n, \, t > 0, \\
        \displaystyle \lim_{t \to 0} \left( \mathcal{I}_{t,\gamma,\rho}^{1-\gamma_1} u \right)(x,t) = f(x), & f \in \Spw(\Rn), \\
        \displaystyle \lim_{t \to 0} \left( \mathcal{I}_{t,\gamma,\rho}^{1-\gamma_2} u \right)(x,t) = g(x), & (\text{if } 1 < \alpha \le 2),
    \end{cases}
\end{equation}
where the parameters $\gamma_1, \gamma_2$ are determined by the Hilfer definition to ensure well-posedness (typically $\gamma_1 = \mu$ in the standard single-term setup).
\begin{remark}
    The range of the spatial parameter is extended to $s \in (0, 1]$. In the limiting case $s=1$, the operator $(-\Delta_{\phi,\omega})^1$ recovers the local weighted Laplacian defined by the spectral symbol $|\xi|^2$, representing classical diffusion (or wave propagation) in an anisotropic inhomogeneous medium.
\end{remark}

\begin{remark}[Focus on the Fundamental Solution]
    In the subsequent analysis, we narrow our focus to the derivation of the \textit{fundamental solution} (or Green's function) of the problem \eqref{eq:hilfer_problem}, hereafter denoted by $\mathcal{G}(x,t)$.
    
    This corresponds to specializing the initial conditions in \eqref{eq:hilfer_problem} to the impulsive case:
    \begin{equation*}
        f(x) = \delta_{\phi,\omega}(x) \quad \text{and} \quad g(x) = 0 \quad (\text{if } 1 < \alpha \le 2),
    \end{equation*}
    where $\delta_{\phi,\omega}(x)$ represents the Dirac distribution \eqref{eq:weighted_delta} Consequently, the derived kernel $\mathcal{G}(x,t)$ characterizes the impulse response of the system, allowing the general solution $u(x,t)$ to be constructed via weighted convolution.
\end{remark}
\subsection{Analytical Solution}

Using the hybrid spectral method ($\Fpw$ for space, generalized Laplace $\mathcal{L}_{\gamma,\rho}$ for time), we derive the solution in terms of generalized Mittag-Leffler functions.

\begin{theorem}
    The fundamental solution to \eqref{eq:hilfer_problem} (considering $g(x)=0$ for simplicity in the wave case) is given by:
    \begin{equation} \label{eq:sol_hilfer}
     \mathcal{G}(x,t) = \Fouinv \left[ \frac{\rho(0^+)}{\rho(t)} (\gamma(t))^{\mu-1} E_{\alpha, \mu} \left( -\lambda |\xi|^{2s} (\gamma(t))^\alpha \right) \right](x).
    \end{equation}
\end{theorem}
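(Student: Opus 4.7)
The plan is to implement a hybrid spectral strategy, decoupling the spatial and temporal operators through their respective integral transforms and then reconstructing the fundamental solution by successive inversion. The procedure exploits three pillars established earlier in the manuscript: the diagonalization property $\Fpw[(-\Delta_{\phi,\omega})^s u](\xi,t) = |\xi|^{2s} \Fpw[u](\xi,t)$ that follows iteratively from the theorem on diagonalization, the spectral identity $\Fpw[\delta_{\phi,\omega}](\xi) = 1$ coming from the definition of the generalized weighted Dirac delta, and the generalized Laplace rule \eqref{eq:Laplace_Hilfer_Weighted_Corrected} for the weighted Hilfer derivative.

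First, I would apply $\Fpw$ to \eqref{eq:hilfer_problem} in the spatial variable $x$. Since $\Fpw$ acts only on $x$, it commutes with $\,{}^{H}_{0+}\mathcal{D}_{t,\gamma,\rho}^{\alpha,\beta}$ and $\mathcal{I}_{t,\gamma,\rho}^{1-\gamma_j}$, so the PDE collapses to the parameter-dependent fractional ODE
\begin{equation*}
	{}^{H}_{0+}\mathcal{D}_{t,\gamma,\rho}^{\alpha,\beta} \hat{u}(\xi,t) + \lambda |\xi|^{2s} \hat{u}(\xi,t) = 0, \qquad \hat{u} := \Fpw u,
\end{equation*}
with transformed initial data $\lim_{t\to 0^+} \mathcal{I}_{t,\gamma,\rho}^{1-\gamma_1}\hat{u} = 1$ and (when $1<\alpha\le 2$) $\lim_{t\to 0^+} \mathcal{I}_{t,\gamma,\rho}^{1-\gamma_2}\hat{u} = 0$.

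Second, I would apply the generalized Laplace transform $\mathcal{L}_{\gamma,\rho}$ in the time variable, invoking \eqref{eq:Laplace_Hilfer_Weighted_Corrected}. The sum over $k$ is reduced to a single surviving term tied to the impulsive condition $\hat{f}\equiv 1$ (the terms associated with $\hat{g}\equiv 0$ vanish), yielding the algebraic equation
\begin{equation*}
	\left( z^\alpha + \lambda |\xi|^{2s} \right) \tilde{\hat{u}}(\xi,z) = \rho(0^+)\, z^{\alpha-\mu},
\end{equation*}
whose unique solution is
\begin{equation*}
	\tilde{\hat{u}}(\xi,z) = \frac{\rho(0^+)\, z^{\alpha-\mu}}{z^\alpha + \lambda |\xi|^{2s}}.
\end{equation*}
Inverting in time through the standard Mittag-Leffler Laplace pair
\begin{equation*}
	\mathcal{L}_{\gamma,\rho}^{-1}\!\left[ \frac{z^{\alpha-\mu}}{z^\alpha + a} \right](t) = \frac{1}{\rho(t)} (\gamma(t))^{\mu-1} E_{\alpha,\mu}\!\left( -a (\gamma(t))^\alpha \right),
\end{equation*}
which itself follows from the conjugation of $\mathcal{L}_{\gamma,\rho}$ with the classical Laplace transform under the change of variable $\tau = \gamma(t)$ together with the weighting by $\rho$, and specializing $a=\lambda|\xi|^{2s}$, produces the spectral form displayed between the brackets in \eqref{eq:sol_hilfer}. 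A final application of $\Fouinv$ then delivers $\mathcal{G}(x,t)$.

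The main technical obstacle lies in the rigorous handling of the impulsive initial data within the weighted Hilfer framework: one must justify that the transformed fractional primitives $\mathcal{I}_{t,\gamma,\rho}^{1-\gamma_j}\hat{u}$ really attain the prescribed limits at $t=0^+$ and that only the $k=0$ contribution survives in the Laplace formula. This should be verified a posteriori by computing $\mathcal{I}_{t,\gamma,\rho}^{1-\gamma_j}$ of the resulting Mittag-Leffler expression and checking the limit term-by-term. A secondary subtlety is that the inverse Fourier step may fail to produce a classical function when $s$ and $\alpha$ lie in the wave regime; this can be resolved by interpreting $\mathcal{G}(\cdot,t)$ as a tempered element of the dual of $\Spw(\Rn)$ and invoking the isometry $\mathcal{T}:f\mapsto (\omega f)\circ\phi^{-1}$ between $\mathcal{H}^{s}_{\phi,\omega}(\Rn)$ and $H^{s}(\Rn)$ to pull back the regularity theory from the classical Euclidean setting.
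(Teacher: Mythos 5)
Your proposal is correct and follows essentially the same route as the paper: spatial diagonalization via $\Fpw$, temporal resolution via the generalized Laplace rule \eqref{eq:Laplace_Hilfer_Weighted_Corrected} with the impulsive datum $\hat{f}\equiv 1$, inversion through the Mittag-Leffler pair, and a final application of $\Fouinv$. One minor slip: in the wave case $1<\alpha\le 2$ (so $m=2$) the surviving term in the Laplace sum is the one with $k=m-1=1$ (since the integral order $m-\mu-k$ must equal $1-\mu$), not $k=0$ as you state in your closing remarks, although the algebraic equation you write down is nonetheless the correct one.
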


\begin{proof}
    We proceed by applying the method of joint integral transforms, exploiting the conjugation properties established in \cite{FernandezFahad2022_Conj} and the spectral results of this work.
    
    \textbf{Step 1: Spatial Diagonalization.}
    Apply the weighted Fourier transform $\Fpw$ with respect to $x$ to the system \eqref{eq:hilfer_problem}. Let $\hat{u}(\xi, t) = [\Fpw u(\cdot, t)](\xi)$.
    Using the spectral definition of the fractional Laplacian (see Definition 6.3), the PDE transforms into a fractional ordinary differential equation in the generalized time variable:
    \begin{equation} \label{eq:ode_space_trans}
        {}^{H}_{0+}\mathcal{D}_{t,\gamma,\rho}^{\alpha,\beta} \hat{\mathcal{G}}(\xi, t) + \lambda |\xi|^{2s} \hat{\mathcal{G}}(\xi, t) = 0.
    \end{equation}
    The initial condition transforms to:
    \begin{equation}
        \lim_{t \to 0} \left( \mathcal{I}_{t,\gamma,\rho}^{1-\mu} \hat{\mathcal{G}} \right)(\xi,t) = \hat{f}(\xi)=1.
    \end{equation}
    (For the case $1 < \alpha \le 2$, the second condition transforms analogously involving $\hat{g}(\xi)$).

    \textbf{Step 2: Temporal Solver via Generalized Laplace Transform.}
    We apply the Generalized Laplace Transform $\mathcal{L}_{\gamma,\rho}$ with respect to $t$ for the weighted Hilfer derivative (see Eq. \eqref{eq:Laplace_Hilfer_Weighted_Corrected}):
    \begin{equation}
        \mathcal{L}_{\gamma,\rho} \left[ {}^{H}\mathcal{D}_{t,\gamma,\rho}^{\alpha,\beta} \psi \right](z) = z^\alpha \tilde{\psi}(z) - \rho(a^+) \sum_{k=0}^{m-1} z^{m - \mu + \alpha - k - 1} \lim_{t \to a^+} \left( \mathcal{I}_{t,\gamma,\rho}^{m - \mu - k} \psi \right)(t),
    \end{equation}
    where $\mu = \alpha + \beta(m-\alpha)$.
    
    For the fundamental solution, we consider the initial conditions given in \eqref{eq:hilfer_problem}. The condition involving $f(x)$ is $\lim_{t \to 0} \mathcal{I}_{t,\gamma,\rho}^{1-\mu} u = f$. This corresponds to the term in the summation where the integral order is $1-\mu$. 
    By setting $m-\mu-k = 1-\mu$, we identify that this corresponds to the index $k = m-1$.
    
    Assuming $a=0$ and that other initial values (if $m=2$) vanish for the fundamental solution, the transform of the derivative becomes:
    \begin{equation}
        \mathcal{L}_{\gamma,\rho} \left[ {}^{H}\mathcal{D}_{t,\gamma,\rho}^{\alpha,\beta} \hat{\mathcal{G}} \right] = z^\alpha \tilde{\hat{\mathcal{G}}}(\xi, z) - \rho(0^+) z^{\alpha - \mu} .
    \end{equation}
    Note that the exponent of $z$ is derived from $m - \mu + \alpha - (m-1) - 1 = \alpha - \mu$.
    
    Substituting this into the transformed equation \eqref{eq:ode_space_trans}:
    \begin{equation}
        z^\alpha \tilde{\hat{\mathcal{G}}}(\xi, z) - \rho(0^+) z^{\alpha - \mu}  + \lambda |\xi|^{2s} \tilde{\hat{\mathcal{G}}}(\xi, z) = 0.
    \end{equation}
    
    Solving algebraically for the transformed solution $\tilde{\hat{u}}$:
    \begin{equation}
        \tilde{\hat{\mathcal{G}}}(\xi, z) = \rho(0^+)  \frac{z^{\alpha-\mu}}{z^\alpha + \lambda |\xi|^{2s}}.
    \end{equation}
    (Note: If the weight function is normalized such that $\rho(0^+)=1$, this factor simplifies).
    
    \textbf{Step 3: Inversion.}
    First, we invert the Generalized Laplace Transform. We use the known transform pair involving the two-parameter Mittag-Leffler function (see \cite{FernandezFahad2022_Fractal}, Corollary 4 and Corollary 8):
    \begin{equation}
        \mathcal{L}^{-1}_{\gamma,\rho} \left[ \frac{z^{\alpha-\delta}}{z^\alpha + A} \right] = \frac{(\gamma(t))^{\delta-1}}{\rho(t)} E_{\alpha, \delta} \left( -A (\gamma(t))^\alpha \right).
    \end{equation}
    Setting $\delta = \mu$ and $A = \lambda |\xi|^{2s}$, we obtain the solution in the frequency domain:
    \begin{equation}
        \hat{\mathcal{G}}(\xi, t) = \frac{\rho(0^+)}{\rho(t)}(\gamma(t))^{\mu-1} E_{\alpha, \mu} \left( -\lambda |\xi|^{2s} (\gamma(t))^\alpha \right).
    \end{equation}
    
    Finally, applying the inverse Weighted Fourier Transform $\Fouinv$ yields the result \eqref{eq:sol_hilfer}.
\end{proof}

\subsection{Explicit Solution via Generalized Mellin Transform and H-Functions}

A central result of this work is the connection between the fundamental solution of the diffusion-wave equation and the \textit{Generalized $\omega$-$\psi$-Mellin Transform} introduced in our previous work \cite{Dorrego}. We show that the spatial kernel is exactly the inverse generalized Mellin transform of a spectral function involving Gamma terms, which explicitly evaluates to a Fox $H$-function.

\begin{remark}[Technical Assumptions on Normalization]
    To streamline the notation in the following derivation, we adopt two conventions without loss of generality:
    \begin{enumerate}
        \item Constants depending solely on the dimension $n$ (arising from the Fourier inversion) are absorbed into the generic constant $C_n$ for brevity.
        \item The weight function $\rho(t)$ is assumed to be normalized such that $\rho(0^+)=1$. In the general case where $\rho(0^+) \neq 1$, this factor simply appears as a multiplicative constant in the final distribution.
    \end{enumerate}
\end{remark}

\begin{theorem}[Mellin Representation of the Generalized Fundamental Solution] \label{thm:mellin_rep}
Let $\mathcal{G}(x,t)$ be the Green's function for the generalized Cauchy problem with respect to the functions $\phi(x)$ (space) and $\gamma(t)$ (time). The solution admits the following representation via the Inverse Generalized Mellin Transform $\mathcal{M}^{-1}_{\phi,\omega}$
expressing this as a contour integral on the complex plane $\mathcal{L}$:
\begin{equation}
    \mathcal{G}(x,t) = (2\sqrt{\pi})^n\frac{(\gamma(t))^{\mu-1}}{|\phi(x)|^n} \frac{1}{2\pi i\omega(x)} \int_{\mathcal{L}} \frac{\Gamma(\zeta)\Gamma(1-\zeta)\Gamma(\frac{n}{2} - s\zeta)}{\Gamma(s\zeta)\Gamma(\mu - \alpha \zeta)} \left( \frac{2^{2s}\lambda (\gamma(t))^\alpha}{|\phi(x)|^{2s}} \right)^{-\zeta} d\zeta.
\end{equation}
\end{theorem}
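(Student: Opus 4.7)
The plan is to convert the inverse weighted Fourier transform produced by the previous theorem into a classical radial inverse Fourier transform on $\mathbb{R}^n$, substitute the Mellin--Barnes integral representation of the Mittag-Leffler function, and evaluate the resulting power-type Fourier integrals using the Riesz potential identity. The Mellin--Barnes contour in $\zeta$ that survives is then identified, via Definition \ref{def:mellin_phi_omega}, with an inverse generalized $\omega$-$\phi$-Mellin transform in the variable $|\phi(x)|$, yielding the claimed representation.

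The first step is to reduce $\Fouinv$ to the classical inverse Fourier transform. The change of variable $u=\phi(x)$ inside Definition \ref{def:fourier} shows that, for any spectral symbol $F(\xi)$, one has $\Fouinv[F](x) = \omega(x)^{-1}\,\mathcal{F}^{-1}[F](\phi(x))$ with $\mathcal{F}^{-1}$ the classical symmetric inverse Fourier transform. Applying this to the Green's function \eqref{eq:sol_hilfer} and using the normalisation indicated in the preceding remark reduces the problem to computing
\[
\mathcal{G}(x,t) \;=\; \frac{(\gamma(t))^{\mu-1}}{\omega(x)}\,\mathcal{F}^{-1}\!\left[E_{\alpha,\mu}\!\bigl(-\lambda|\xi|^{2s}(\gamma(t))^\alpha\bigr)\right]\!(\phi(x)).
\]

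The second step is to insert the Mellin--Barnes representation
\[
E_{\alpha,\mu}(-z) \;=\; \frac{1}{2\pi i}\int_{\mathcal{L}}\frac{\Gamma(\zeta)\Gamma(1-\zeta)}{\Gamma(\mu-\alpha\zeta)}\,z^{-\zeta}\,d\zeta
\]
with $z=\lambda(\gamma(t))^\alpha |\xi|^{2s}$, for a vertical Bromwich contour $\mathcal{L}$ in a strip $0<\mathrm{Re}(\zeta)<\min(1, n/(2s))$. After a Fubini interchange (see the obstacle below), the classical Fourier inversion produces on each vertical slice a power $|\xi|^{-2s\zeta}$, whose inverse transform is given by the Riesz potential identity
\[
\mathcal{F}^{-1}[|\xi|^{-2s\zeta}](u) \;=\; \frac{2^{n/2-2s\zeta}\,\Gamma(\tfrac{n}{2}-s\zeta)}{\Gamma(s\zeta)}\,|u|^{2s\zeta-n}.
\]
Specialising $u=\phi(x)$, the $\zeta$-independent factor $|\phi(x)|^{-n}$ is pulled outside the contour, and the remaining $|\phi(x)|^{2s\zeta}$ combines with $(\lambda(\gamma(t))^\alpha)^{-\zeta}\,2^{-2s\zeta}$ to produce the kernel $\bigl(2^{2s}\lambda(\gamma(t))^\alpha/|\phi(x)|^{2s}\bigr)^{-\zeta}$ exactly as displayed in the statement. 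Collecting the Gamma factors then yields the integrand of the theorem, while the dimensional constants coming from the Riesz identity and the symmetric $(2\pi)^{-n/2}$ normalisation of $\Fpw$ are absorbed into the single symbol $C_n=(2\sqrt{\pi})^n$, in accordance with the announced normalisation convention. Recognising the contour integral as $\mathcal{M}_{\phi,\omega}^{-1}$ of the resulting Mellin symbol, per Definition \ref{def:mellin_phi_omega}, completes the representation.

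The main obstacle is the rigorous justification of the Fubini interchange between the classical Fourier inversion and the Mellin--Barnes contour, since the homogeneous symbols $|\xi|^{-2s\zeta}$ are only locally integrable for suitable $\zeta$ and the Riesz identity is, strictly speaking, a tempered-distributional identity. I would handle this by keeping $\mathrm{Re}(\zeta)$ in the nonempty strip where $|\xi|^{-2s\zeta}\in\mathcal{S}'(\mathbb{R}^n)$ admits the Riesz closed form, using the exponential decay of $|\Gamma(\zeta)\Gamma(1-\zeta)|$ on vertical lines (via Stirling) to dominate the polynomial growth of $\Gamma(\tfrac{n}{2}-s\zeta)/[\Gamma(s\zeta)\Gamma(\mu-\alpha\zeta)]$, and pairing the identity against arbitrary Schwartz test functions before passing to $\Spw(\Rn)$ through the isometry of Proposition 6.3. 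Analytic continuation in $\zeta$ then extends the resulting identity to the full contour $\mathcal{L}$ appearing in the statement.
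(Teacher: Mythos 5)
Your proposal follows the paper's own proof essentially step for step: the Mellin--Barnes representation of $E_{\alpha,\mu}$, the Fubini interchange with the inverse weighted Fourier transform, the Riesz potential identity applied to $|\xi|^{-2s\zeta}$ at $u=\phi(x)$, and the reassembly of the surviving contour integral as an inverse generalized Mellin transform in $|\phi(x)|$; your discussion of how to justify the interchange (strip for $\mathrm{Re}(\zeta)$, Stirling decay, pairing with test functions) is in fact more careful than the paper's. The only wrinkle is bookkeeping: your Riesz constant $2^{n/2-2s\zeta}$ is the one adapted to the symmetric $(2\pi)^{-n/2}$ normalisation, whereas the displayed prefactor $(2\sqrt{\pi})^n$ comes from the unnormalised constant $2^{n-2s\zeta}\pi^{n/2}$ that the paper actually invokes --- harmless here, since the $\zeta$-dependent factor $2^{-2s\zeta}$ agrees and the accompanying remark absorbs dimension-only constants.
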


\begin{theorem}[Fox H-Function Representation] \label{thm:fox_solution}
The fundamental solution $\mathcal{G}(x,t)$ derived in Theorem \ref{thm:mellin_rep} can be expressed compactly in terms of the Fox H-function. Identifying the kernel parameters with the standard definition $H^{m,n}_{p,q}[z]$, we obtain:
\begin{equation} \label{eq:fox_final}
    \mathcal{G}(x,t) = (2\sqrt{\pi})^n\frac{(\gamma(t))^{\mu-1}}{|\phi(x)|^n} H^{1,2}_{3,2} \left[ \frac{|\phi(x)|}{(\lambda (\gamma(t))^\alpha)^{1/2s}} \, \Bigg| \, \begin{matrix} (0,1), (1 - \frac{n}{2}, s), (0,s) \\ (0,1), (1-\mu, \alpha) \end{matrix} \right].
\end{equation}
Here, the argument of the H-function $Z = |\phi(x)| (\lambda (\gamma(t))^\alpha)^{-1/2s}$ captures the anomalous scaling between the geometric deformation $\phi(x)$ and the generalized time $\gamma(t)$.
\end{theorem}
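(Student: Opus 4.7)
The plan is to prove Theorem \ref{thm:fox_solution} by direct identification of the Mellin-Barnes contour integral established in Theorem \ref{thm:mellin_rep} with the canonical Mellin-Barnes definition of the Fox H-function,
\[
H^{m,n}_{p,q}\!\left[z \, \Big|\, \begin{matrix}(a_j,A_j)_{j=1}^p \\ (b_j,B_j)_{j=1}^q \end{matrix}\right] = \frac{1}{2\pi i}\int_{\mathcal{L}} \frac{\prod_{j=1}^{m}\Gamma(b_j+B_j\zeta)\prod_{j=1}^{n}\Gamma(1-a_j-A_j\zeta)}{\prod_{j=m+1}^{q}\Gamma(1-b_j-B_j\zeta)\prod_{j=n+1}^{p}\Gamma(a_j+A_j\zeta)}\, z^{-\zeta}\,d\zeta.
\]
Since the integrand in Theorem \ref{thm:mellin_rep} already has the structure of this Fox kernel, the whole argument reduces to (i) reading off the parameters from each Gamma factor and (ii) normalizing the base of the power $z^{-\zeta}$ so that it agrees with the argument claimed in \eqref{eq:fox_final}.

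For step (i) I would rewrite each Gamma factor of the Mellin kernel in canonical form. The numerator factor $\Gamma(\zeta)$ is of ``$b$-type'' with $(b_1,B_1)=(0,1)$, contributing $m=1$; the numerator factors $\Gamma(1-\zeta) = \Gamma(1-0-1\cdot\zeta)$ and $\Gamma(\tfrac{n}{2}-s\zeta) = \Gamma(1-(1-\tfrac{n}{2})-s\zeta)$ are of ``$a$-type'' with $(a_1,A_1)=(0,1)$ and $(a_2,A_2)=(1-\tfrac{n}{2},s)$, giving $n=2$. The denominator factors are read dually: $\Gamma(s\zeta)$ yields $(a_3,A_3)=(0,s)$ and $\Gamma(\mu-\alpha\zeta) = \Gamma(1-(1-\mu)-\alpha\zeta)$ yields $(b_2,B_2)=(1-\mu,\alpha)$, closing out $p=3$ and $q=2$. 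This reproduces exactly the index set $H^{1,2}_{3,2}$ and the parameter table displayed in \eqref{eq:fox_final}.

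For step (ii) I would address the base of the exponential. The Mellin integrand carries $\bigl(2^{2s}\lambda(\gamma(t))^\alpha/|\phi(x)|^{2s}\bigr)^{-\zeta}$, whereas the claimed argument is $Z = |\phi(x)|\,(\lambda(\gamma(t))^\alpha)^{-1/(2s)}$, the two bases being related (up to the benign constant $2^{2s}$) by a $(-2s)$-th power. To bridge them, I would invoke the standard H-function scaling identity
\[
H^{m,n}_{p,q}\!\left[z \, \Big|\, \begin{matrix}(a_j,A_j) \\ (b_j,B_j)\end{matrix}\right] = \frac{1}{k}\,H^{m,n}_{p,q}\!\left[z^{1/k} \, \Big|\, \begin{matrix}(a_j,A_j/k) \\ (b_j,B_j/k)\end{matrix}\right],
\]
equivalently a substitution $\zeta \mapsto \zeta/(2s)$ in the contour integral, to convert the base into $Z$ while simultaneously rescaling parameters consistently with \eqref{eq:fox_final}. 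The residual numerical constants are absorbed into the global prefactor $(2\sqrt{\pi})^n(\gamma(t))^{\mu-1}/|\phi(x)|^n$.

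The main obstacle I expect is precisely this normalization step: the simultaneous tracking of the factor $2^{2s}$, the exponent $2s$, the Jacobian $1/(2s)$ from the contour substitution, and the rescaled parameters is delicate, and one must additionally verify that after the substitution the transformed contour still separates the poles of $\Gamma(\zeta)$ (lying to its left) from those of $\Gamma(1-\zeta)$ and $\Gamma(\tfrac{n}{2}-s\zeta)$ (lying to its right), so that the deformation is legitimate. Once this bookkeeping is settled, absolute convergence of the resulting Fox integral on $Z$ follows from the convergence of the Mellin-Barnes representation of Theorem \ref{thm:mellin_rep} together with the standard Fox asymptotic criteria (in particular $\Delta = \sum B_j - \sum A_j = \alpha - 2s$), completing the identification \eqref{eq:fox_final}.
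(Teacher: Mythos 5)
Your step (i) coincides exactly with the paper's own argument: the paper likewise quotes the Mellin--Barnes definition of $H^{m,n}_{p,q}$ and reads off $\Gamma(\zeta)\to(0,1)$ as the single $b$-type numerator factor ($m=1$), $\Gamma(1-\zeta)\to(0,1)$ and $\Gamma(\tfrac{n}{2}-s\zeta)\to(1-\tfrac{n}{2},s)$ as $a$-type factors ($n=2$), and $\Gamma(s\zeta)\to(0,s)$, $\Gamma(\mu-\alpha\zeta)\to(1-\mu,\alpha)$ in the denominator, arriving at the same $H^{1,2}_{3,2}$ index set and parameter table. Up to that point the two arguments are identical.

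The divergence, and the genuine gap, is your step (ii). You correctly observe that the contour integral of Theorem \ref{thm:mellin_rep} carries the base $2^{2s}\lambda(\gamma(t))^{\alpha}/|\phi(x)|^{2s}$, which is (up to the constant $2^{2s}$) the $(-2s)$-th power of the argument $Z$ claimed in \eqref{eq:fox_final}, and you propose to bridge the two with the scaling identity. But that identity cannot deliver \eqref{eq:fox_final} as stated: taking $k=2s$ replaces every $A_j$, $B_j$ by $A_j/(2s)$, $B_j/(2s)$, so the parameter table would become $(0,\tfrac{1}{2s})$, $(1-\tfrac{n}{2},\tfrac{1}{2})$, $(0,\tfrac{1}{2})$ over $(0,\tfrac{1}{2s})$, $(1-\mu,\tfrac{\alpha}{2s})$, not the one displayed in the theorem; and since the two bases are related by a \emph{negative} power, you would additionally need the argument-inversion identity $H^{m,n}_{p,q}[z]=H^{n,m}_{q,p}[1/z\mid\cdots]$, which swaps $(m,n)$ with $(n,m)$, $(p,q)$ with $(q,p)$, and permutes the parameter rows. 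Carried out honestly, your normalization therefore lands on a different (equivalent) H-function, not on formula \eqref{eq:fox_final}. For comparison, the paper's proof does not perform this step at all: it identifies the parameters directly from the $\zeta$-integral and then simply declares the argument to be $Z$, leaving the mismatch between the base $\bigl(2^{2s}\lambda(\gamma(t))^{\alpha}/|\phi(x)|^{2s}\bigr)^{-\zeta}$ and the displayed argument unreconciled (it also drops the factors $1/\omega(x)$ and $\rho(0^+)/\rho(t)$ present in \eqref{eq:integral_rep} when passing to \eqref{eq:final_fox}). So your instinct that the normalization is ``the main obstacle'' is exactly right; the obstacle is real, and neither your scaling-identity plan nor the paper's silent identification resolves it without changing the stated parameter list. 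A correct write-up would either keep the argument as the Mellin base with the unrescaled parameters, or state the rescaled/inverted parameter table consistent with the similarity variable $Z$.
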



\begin{proof}[\textbf{Proof of Theorems 7.4 and 7.5}]
To demonstrate the integral representation and its identification with the Fox H-function, we commence from the fundamental solution in the Fourier space derived in Theorem 7.3:
\begin{equation}
    \hat{\mathcal{G}}(\xi, t) =\frac{\rho(0^+)}{\rho(t)} (\gamma(t))^{\mu-1} E_{\alpha, \mu} \left( -\lambda |\xi|^{2s} (\gamma(t))^\alpha \right).
    \label{eq:sol_fourier}
\end{equation}
We utilise the Mellin--Barnes representation for the Mittag-Leffler function $E_{\alpha, \mu}(-z)$, given by:
\begin{equation}\label{M-L as Mellin-Barnes}
    E_{\alpha, \mu}(-z) = \frac{1}{2\pi i} \int_{\mathcal{L}} \frac{\Gamma(\zeta)\Gamma(1-\zeta)}{\Gamma(\mu - \alpha\zeta)} z^{-\zeta} d\zeta,
\end{equation}
where $\mathcal{L}$ is a suitable contour of integration in the complex plane (e.g., a Hankel contour separating the poles of $\Gamma(\zeta)$ and $\Gamma(1-\zeta)$). Substituting $z = \lambda |\xi|^{2s} (\gamma(t))^\alpha$ into \eqref{M-L as Mellin-Barnes} and these expresion in \eqref{eq:sol_fourier}, we obtain:
\begin{equation}
    \hat{\mathcal{G}}(\xi, t) = \frac{\rho(0^+)}{\rho(t)}(\gamma(t))^{\mu-1} \frac{1}{2\pi i} \int_{\mathcal{L}} \frac{\Gamma(\zeta)\Gamma(1-\zeta)}{\Gamma(\mu - \alpha\zeta)} \left[ \lambda (\gamma(t))^\alpha \right]^{-\zeta} |\xi|^{-2s\zeta} d\zeta.
\end{equation}
We now apply the weighted inverse Fourier transform, $\mathcal{F}^{-1}_{\phi, \omega}$, to recover the solution in the physical space. We assume that the interchange of the order of integration is permissible by virtue of Fubini's theorem. This assumption is supported by the asymptotic behaviour of the Gamma function, which provides sufficient exponential decay along the contour $\mathcal{L}$ to guarantee the absolute convergence of the integral.
\begin{equation}
    \mathcal{G}(x, t) = \frac{\rho(0^+)}{\rho(t)}\frac{(\gamma(t))^{\mu-1}}{2\pi i} \int_{\mathcal{L}} \frac{\Gamma(\zeta)\Gamma(1-\zeta) (\lambda (\gamma(t))^\alpha)^{-\zeta}}{\Gamma(\mu - \alpha\zeta)} \left[ \mathcal{F}^{-1}_{\phi, \omega} \left( |\xi|^{-2s\zeta} \right)(x) \right] d\zeta.
\end{equation}
The inner inverse transform corresponds to the Riesz potential in the weighted setting. Recalling the standard result for the inverse Fourier transform of a radial power $|\xi|^{-\beta}$ in $\mathbb{R}^n$:
\begin{equation}
    \mathcal{F}^{-1}(|\xi|^{-\beta})(y) = C_{n,\beta} |y|^{\beta - n}, \quad \text{where } C_{n,\beta} = \frac{2^{n-\beta}\pi^{n/2} \Gamma((n-\beta)/2)}{\Gamma(\beta/2)}.
\end{equation}
In our context, with exponent $\beta = 2s\zeta$ and transformed variable $y = \phi(x)$, the spatial term becomes:
\begin{equation}
    \mathcal{F}^{-1}_{\phi, \omega} \left( |\xi|^{-2s\zeta} \right)(x) = \frac{1}{\omega(x)} \frac{2^{n-2s\zeta}\pi^{n/2} \Gamma(n/2 - s\zeta)}{\Gamma(s\zeta)} |\phi(x)|^{2s\zeta - n}.
\end{equation}
Substituting this back into the contour integral and reorganising the terms, we arrive at:
\begin{equation}
    \mathcal{G}(x,t) = \frac{(\gamma(t))^{\mu-1}}{\omega(x) |\phi(x)|^n} (2\sqrt{\pi})^n \frac{1}{2\pi i} \int_{\mathcal{L}} \underbrace{ \frac{\Gamma(\zeta)\Gamma(1-\zeta)\Gamma(\frac{n}{2} - s\zeta)}{\Gamma(\mu - \alpha\zeta)\Gamma(s\zeta)} }_{\text{Kernel } \chi(\zeta)} \left[ \frac{2^{2s} \lambda (\gamma(t))^\alpha}{|\phi(x)|^{2s}} \right]^{-\zeta} \, d\zeta.
\label{eq:integral_rep}
\end{equation}
\textbf{In view of the definition of the inverse generalized Mellin transform given by (2.9)}, the expression \eqref{eq:integral_rep} constitutes precisely the inverse transform of the spectral function involving the Gamma factors. This establishes the validity of the representation in \textbf{Theorem 7.4} (Eq. 7.12).

Finally, to prove \textbf{Theorem 7.5}, we identify this integral with the definition of the Fox H-function:
\begin{equation}
    H_{p,q}^{m,n} \left[ z \middle| \begin{array}{c} (a_p, A_p) \\ (b_q, B_q) \end{array} \right] = \frac{1}{2\pi i} \int_{\mathcal{L}} \frac{\prod_{j=1}^m \Gamma(b_j + B_j \zeta) \prod_{j=1}^n \Gamma(1 - a_j - A_j \zeta)}{\prod_{j=m+1}^q \Gamma(1 - b_j - B_j \zeta) \prod_{j=n+1}^p \Gamma(a_j + A_j \zeta)} z^{-\zeta} d\zeta.
\end{equation}
Mapping the parameters from \eqref{eq:integral_rep}:
\begin{itemize}
    \item Numerator: $\Gamma(\zeta) \to (0,1)$, $\Gamma(1-\zeta) \to (0,1)$, $\Gamma(n/2 - s\zeta) \to (1-n/2, s)$.
    \item Denominator: $\Gamma(\mu - \alpha\zeta) \to (1-\mu, \alpha)$, $\Gamma(s\zeta) \to (0, s)$.
\end{itemize}
We conclude that the solution can be written compactly as:
\begin{equation} \label{eq:final_fox}
    \mathcal{G}(x,t) = \frac{(\gamma(t))^{\mu-1}}{|\phi(x)|^n} H^{1,2}_{3,2} \left[ \frac{|\phi(x)|}{(\lambda (\gamma(t))^\alpha)^{1/2s}} \, \Bigg| \, \begin{matrix} (0,1), (1 - \frac{n}{2}, s), (0,s) \\ (0,1), (1-\mu, \alpha) \end{matrix} \right].
\end{equation}
which completes the proof.
\end{proof}

\begin{remark}
\label{rem:fox_convergence}
\textbf{(Convergence and Parameter Validity).} The representation of the fundamental solution involves the Fox H-function defined via a Mellin-Barnes integral. For this expression to be rigorous, the parameters must satisfy the existence conditions established in the standard theory (see, e.g., Mathai et al. \cite{mathai2010}). Specifically, the integration contour $\mathcal{L}$ in the complex plane separates the poles of the Gamma functions $\Gamma(b_j + \beta_j s)$ from those of $\Gamma(1 - a_j - \alpha_j s)$. The absolute convergence of the integral is guaranteed by the condition $\sum_{j=1}^n \alpha_j - \sum_{j=1}^q \beta_j \le 0$ (and associated inequalities for the argument), which holds for the fractional diffusion parameters considered herein. This ensures that the fundamental solution is not only a formal expression but a well-defined function in the distributional sense.
\end{remark}

\begin{remark}[Universality via Conjugation and Time-Rescaling]
    The formulation presented in Theorem \ref{thm:fox_solution} offers a unified framework for anomalous diffusion. By appropriately selecting the time-scale function $\gamma(t)$ and a time-domain weight $\rho(t)$, our result encompasses the solutions for a wide range of fractional operators defined via \textit{conjugation relations}, as proposed by Fernandez and Fahad \cite{FernandezFahad2022_Fractal}.
    Specifically:
    \begin{itemize}
        \item The choice of $\gamma(t)$ allows recovering derivatives with respect to a function (e.g., $\psi$-Caputo or $\psi$-Hilfer).
        \item The introduction of a weight $\rho(t)$ corresponds to the \textbf{weighted Hilfer derivative with respect to a function}, defined essentially as $D_{\gamma, \rho}^{\alpha, \beta} f = \rho^{-1} D_{\gamma}^{\alpha, \beta} (\rho f)$.
    \end{itemize}
    In this context, the prefactor $(\gamma(t))^{\mu-1}$ in Eq. \eqref{eq:fox_final} can be interpreted as the natural kernel arising from the specific choice of the weight and scale functions in the generalized spectral domain.
\end{remark}
\begin{remark}[Comparison with Literature]
This result significantly generalizes recent works in two directions:
\begin{itemize}
    \item \textbf{Vs. Vieira et al. (2022) \cite{Vieira2022}:} They analyzed the diffusion equation involving the $\psi$-Hilfer derivative in standard Euclidean space. Our work extends this analysis to the fully weighted setting. We employ the weighted Hilfer-type derivative with respect to a function (based on the conjugation framework of Fernandez and Fahad \cite{FernandezFahad2022_Conj}) for the time evolution, and the weighted fractional Laplacian for the spatial component. Consequently, our fundamental solution captures both complex temporal memory effects and spatial heterogeneity via $\phi$ and $\omega$, generalizing their homogeneous space results.
    
    \item \textbf{Vs. Fernandez et al. (2024) \cite{Fernandez2024}:} While they successfully analyzed partial differential equations involving the fractional Laplacian with respect to a function $\phi$, their framework generally corresponds to the unweighted case ($\omega \equiv 1$). Our results generalize this analysis to the weighted setting, where the interplay between the geometry $\phi$ and the density $\omega$ necessitates the use of the spectral theory developed herein, leading to solutions governed by the generalized $\omega$-$\psi$-Mellin transform.
\end{itemize}
\end{remark}

\section{Conclusions}

In this work, we have successfully constructed a robust spectral theory for the Weighted Fourier Transform in $\mathbb{R}^n$. We established the fundamental harmonic analysis results within a weighted Hilbert space setting—including the Inversion Formula, Plancherel's Theorem, and Convolution—and proved a Heisenberg-type Uncertainty Principle, confirming the geometric consistency of the proposed transform.

Furthermore, we demonstrated the power of this framework by solving the generalized time-space fractional diffusion-wave equation involving the \textit{Weighted Hilfer Fractional Derivative}. Our results show that this spectral method allows for a unified analytical treatment that continuously interpolates between:
\begin{itemize}
    \item \textbf{Dynamic Regimes:} From sub-diffusion ($0 < \alpha < 1$) to wave propagation ($1 < \alpha \le 2$).
    \item \textbf{Initial Conditions:} From singular Riemann-Liouville problems ($\beta=0$) to regular Caputo problems ($\beta=1$).
\end{itemize}

Notably, the explicit solutions reveal an intrinsic connection with the Generalized Mellin Transform, confirming that the triad of transforms ($\mathcal{F}_{\phi,\omega}$, $\mathcal{L}_{\gamma,\rho}$, and $\mathcal{M}_{\phi,\omega}$) constitutes a complete and closed operational calculus for complex fractional systems. Significantly, the derived Uncertainty Principle revealed that the medium's geometry ($\phi$) dictates the fundamental limits of information localization, providing a spectral basis for the phenomenon of anomalous diffusion.

Finally, it is worth noting a promising avenue for future research regarding the geometric constraints. While this work focused on global diffeomorphisms with non-vanishing Jacobians to establish the spectral theory on $\mathbb{R}^n$, the explicit representation of the fundamental solution in terms of the Fox $H$-function suggests a natural extension to domains with singularities. Since $H$-functions possess rich asymptotic expansions near singularities—governed by the residues of the Gamma functions in the Mellin--Barnes integral—this framework could be adapted to handle weighted spaces where the Jacobian becomes singular or degenerate at the boundary. Investigating how the parameters of the $H$-function capture the asymptotic behavior of the solution near such geometric singularities remains an open and compelling problem. In addition to these theoretical extensions, future efforts will also focus on the numerical implementation of these operators and their application to bounded domains.

\end{document}